\documentclass{article}

\usepackage[utf8]{inputenc}
\usepackage[english]{babel}
\usepackage{amsthm}
\usepackage{amsmath}
\usepackage{graphicx}
\usepackage{float}
\usepackage{tikz} 
\usepackage{url} 

\usepackage[colorlinks=true]{hyperref}%

\providecommand{\keywords}[1]{\textbf{\textit{Keywords---}} #1}
\newcommand{\stargraph}[2]{\begin{tikzpicture}
    \node[circle,fill=black] at (360:0mm) (center) {};
    \foreach \n in {1,...,#1}{
        \node[circle,fill=black] at ({\n*360/#1}:#2cm) (n\n) {};
        \draw (center)--(n\n);
    }
\end{tikzpicture}}

\newtheorem{theorem}{Theorem}[section]
\newtheorem{conjecture}[theorem]{Conjecture}
\newtheorem{corollary}[theorem]{Corollary}
\newtheorem{definition}[theorem]{Definition}
\newtheorem{lemma}[theorem]{Lemma}
\newtheorem{observation}[theorem]{Observation}
\newtheorem{problem}[theorem]{Problem}

\title{Inequalities Connecting the Annihilation and Independence Numbers}
\author{
        Ohr Kadrawi \\
        Department of Mathematics\\
        Ariel University\\
        Ariel 4070000, \underline{Israel}\\
        orka@ariel.ac.il
        \and
        Vadim E. Levit \\
        Department of Mathematics\\
        Ariel University\\
        Ariel 4070000, \underline{Israel}\\
        levitv@ariel.ac.il
}

\begin{document}

    \maketitle
       
    \begin{abstract}
    
        Given a graph $G$, the number of its vertices is represented by $n(G)$, while the number of its edges is denoted as $m(G)$.
        An \textit{independent set} in a graph is a set of vertices where no two vertices are adjacent to each other and the size of the maximum independent set is denoted by $\alpha(G)$. A \textit{matching} in a graph refers to a set of edges where no two edges share a common vertex and the maximum matching size is denoted by $\mu(G)$. If $\alpha(G) + \mu(G) = n(G)$, then the graph $G$ is called a \textit{K\"{o}nig--Egerv\'{a}ry graph}.
        
        Considering a graph $G$ with a degree sequence $d_1 \leq d_2 \leq \cdots \leq d_n$, the \textit{annihilation number} $a(G)$ is defined as the largest integer $k$ such that the sum of the first $k$ degrees in the sequence is less than or equal to $m(G)$ (Pepper, 2004).
        
        It is a known fact that $\alpha(G)$ is less than or equal to $a(G)$ for any graph $G$. Our goal is to estimate the difference between these two parameters. Specifically, we prove a series of inequalities, including
        $a(G) - \alpha(G) \leq \frac{\mu(G) - 1}{2}$ for trees, $a(G) - \alpha(G) \leq 2 + \mu(G) - 2\sqrt{1 + \mu(G)}$ for bipartite graphs and $a(G) - \alpha(G) \leq \mu(G) - 2$ for K\"{o}nig--Egerv\'{a}ry graphs. Furthermore, we demonstrate that these inequalities serve as tight upper bounds for the difference between the annihilation and independence numbers, regardless of the assigned value for $\mu(G)$.
        
    \end{abstract}

    \keywords{annihilation number, independence number, tree, bipartite graph, K\"{o}nig--Egerv\'{a}ry graph}.

    \newpage
    \section{Introduction}

        In this paper, we consider a finite, undirected graph $G = (V, E)$ without loops or multiple edges. The graph has a vertex set denoted by $V(G)$ with a cardinality of $|V(G)| = n(G)$, and an edge set denoted by $E(G)$ with a cardinality of $|E(G)| = m(G)$.
        
        A subset $S \subseteq V(G)$ is considered \textit{independent} if no two vertices in $S$ are adjacent. The collection of all independent sets of $G$ is denoted as $Ind(G)$. A \textit{maximum independent set} of $G$ is an independent set with the largest possible size. The \textit{independence number} of $G$ is denoted as $\alpha(G)$ and is defined as the maximum cardinality among all sets $S \in Ind(G)$.

        A \textit{matching} in a graph $G$ refers to a set of edges $M \subseteq E(G)$ where no two edges in $M$ share a common vertex. A \textit{maximum matching}, denoted as $\mu(G)$, is a matching with the largest possible cardinality. The cardinality of a maximum matching is called the \textit{matching number} of the graph.

        A graph is a \textit{bipartite} if and only if it does not contain odd cycles. Clearly, every subgraph of a bipartite graph is also bipartite.
        
        For any graph $G$, it is well-known, as stated in  \cite{BorosGolumbicLevit2002}, that the following inequalities hold: $\alpha(G) + \mu(G) \leq n(G) \leq \alpha(G) + 2\mu(G)$. If a graph $G$ satisfies the condition $\alpha(G) + \mu(G) = n(G)$, it is referred to as a \textit{K\"{o}nig--Egerv\'{a}ry} graph, as mentioned in \cite{Deming1979, Sterboul1979}. Notably, all bipartite graphs and trees belong to the class of K\"{o}nig--Egerv\'{a}ry graphs.

        Consider the \textit{degree sequence} of a graph $G$ given by $d_1 \leq d_2 \leq \ldots \leq d_a \leq d_{a+1} \leq \ldots \leq d_n$. The \textit{annihilation number} of $G$, denoted as $a(G)$, was introduced by Pepper \cite{Pepper2009, Pepper2004}. It is defined as the largest integer $k$ such that the sum of the first $k$ terms in the degree sequence is no greater than half the sum of all the degrees in that sequence. In other words, $a(G)$ is the maximum value of $k$ satisfying $\sum_{i=1}^{k} d_i \leq m(G)$.
        
        Let $A \subseteq V(G)$ be a subset of vertices. The notation $deg(A)$ denotes the sum of the degrees of the vertices in $A$, i.e., $\sum_{v \in A}^{} d(v)$. An \textit{annihilating set} refers to any subset $A \subseteq V(G)$ that satisfies the condition $deg(A) \leq m(G)$. It is evident that every independent set is also an annihilating set.

        An annihilating set $A$ is considered \textit{maximal} if for every vertex $x \in V(G) - A$, adding $x$ to $A$ results in $deg(A \cup {x}) > m(G)$. On the other hand, an annihilating set $A$ is called \textit{maximum} if its cardinality is equal to the annihilation number of the graph, i.e., $|A| = a(G)$, as stated in \cite{Pepper2004}.

        For instance, consider a graph $G = K_{p,q} = (A, B, E)$ where $p > q$. In this case, $A$ is a maximum annihilating set, while $B$ is a maximal annihilating set.

        \begin{theorem} \cite{LevitMandrescu2020, Pepper2004}
            For every graph $G$, $a(G) \geq max\{\alpha(G), \frac{n(G)}{2}\}$.
        \end{theorem}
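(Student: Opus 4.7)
The plan is to prove the two lower bounds separately, each by a short counting argument.

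For $a(G) \geq \alpha(G)$, I would start with a maximum independent set $S$, so $|S| = \alpha(G)$. Since $S$ contains no edge of $G$, every edge has at most one endpoint in $S$, and therefore
$$\sum_{v \in S} d(v) \;\leq\; m(G),$$
i.e. $S$ is an annihilating set. The bridge to the definition of $a(G)$ is the elementary observation that among all subsets of $V(G)$ of cardinality $|S|$, the one with the smallest total degree is the set realized by the $|S|$ smallest terms of the sorted degree sequence. Hence $\sum_{i=1}^{\alpha(G)} d_i \leq \sum_{v \in S} d(v) \leq m(G)$, which by the definition of the annihilation number gives $a(G) \geq \alpha(G)$.

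For $a(G) \geq n(G)/2$, I would apply a pairing argument to the sorted degree sequence. Let $k = \lfloor n(G)/2 \rfloor$. Since $d_1 \leq d_2 \leq \cdots \leq d_n$, we have $d_i \leq d_{i+k}$ for every $i = 1, \ldots, k$, so
$$\sum_{i=1}^{k} d_i \;\leq\; \sum_{i=1}^{k} d_{i+k} \;=\; \sum_{j=k+1}^{2k} d_j \;\leq\; \sum_{j=k+1}^{n} d_j \;=\; 2m(G) - \sum_{i=1}^{k} d_i.$$
Rearranging yields $\sum_{i=1}^{k} d_i \leq m(G)$, and therefore $a(G) \geq k = \lfloor n(G)/2 \rfloor$.

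Neither half is technically hard. The only point of genuine care is the first one: one must be precise about the distinction between the statement ``there exists a set of $\alpha(G)$ vertices whose degree sum is at most $m(G)$'' and the statement ``the $\alpha(G)$ smallest degrees sum to at most $m(G)$'', because the annihilation number is defined via the latter. This is precisely the elementary ``greedy'' inequality used above. The parity of $n(G)$ in the second half takes care of itself through the floor function, since then $n(G)-k \geq k$ terms of the degree sequence sit on the heavier side of the pairing.
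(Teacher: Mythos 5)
The paper offers no proof of this theorem at all --- it is imported from the references it cites --- so there is nothing internal to compare your argument against; it has to stand on its own, and it does. Both halves are correct and are the standard arguments: for $a(G)\geq\alpha(G)$ you correctly combine the fact that an independent set meets every edge at most once (so its degree sum is at most $m(G)$) with the greedy observation that the $\alpha(G)$ smallest degrees minimize the degree sum over all sets of that cardinality, and your pairing argument $d_i\leq d_{i+k}$ for the second half is sound. One caveat deserves mention: what you actually establish in the second half is $a(G)\geq\lfloor n(G)/2\rfloor$, which is strictly weaker than the literal claim $a(G)\geq n(G)/2$ when $n(G)$ is odd. However, the literal claim is false as written --- for $K_3$ the degree sequence is $2,2,2$ with $m=3$, so $a(K_3)=1<3/2$ --- so the floor version you prove is the correct form of the theorem, and your proof should be read as establishing that; this is an imprecision in the paper's statement rather than a gap in your argument.
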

        Extensive research has been conducted to explore the relationship between the annihilation number and various parameters of a graph 
        \cite{  Amjadi2015, AramKhoeilarSheikholeslamiVolkmann2018, BujtasJakovac, DehgardiNorouzianSheikholeslami2013, DehgardiSheikholeslamiKhodkar2013, DehgardiSheikholeslamiKhodkar2014, DesormeauxHaynesHenning2013, GentnerHenningRautenbach, HuaXuHua2023, Jakovac2019, JaumeaMolina2018, LarsonPepper2011, LevitMandrescu2020, LevitMandrescu2022, NingLuWang2019, Pepper2004}.

        \begin{lemma}
            For every graph $G$, $a(G) - \alpha(G) \leq n(G) - 1$.
        \end{lemma}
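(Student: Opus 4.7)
The plan is to observe that both halves of the desired inequality admit essentially one-line justifications, so the proof reduces to combining two trivial bounds.

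First I would bound $a(G)$ from above by $n(G)$. This is immediate from the definition: any annihilating set is a subset of $V(G)$, so its cardinality cannot exceed $n(G)$, and hence $a(G) \leq n(G)$.

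Next I would bound $\alpha(G)$ from below by $1$, which is valid for any graph on at least one vertex since a single vertex trivially forms an independent set. Subtracting the two inequalities then yields
\[
a(G) - \alpha(G) \leq n(G) - 1,
\]
as required.

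There is no real obstacle here; the only mild subtlety is that the statement implicitly assumes $n(G) \geq 1$ (otherwise $\alpha(G) \geq 1$ fails). One could even sharpen the argument slightly by noting that if $G$ has at least one edge, then $\deg(V(G)) = 2m(G) > m(G)$, forcing $a(G) \leq n(G) - 1$ and giving $a(G) - \alpha(G) \leq n(G) - 2$; while if $G$ is edgeless, then $a(G) = \alpha(G) = n(G)$ and the difference is $0$. Either refinement is stronger than the stated lemma, but for the lemma as written the two elementary bounds $a(G) \leq n(G)$ and $\alpha(G) \geq 1$ suffice.
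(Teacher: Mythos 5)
Your proof is correct and follows essentially the same route as the paper: bound $a(G) \leq n(G)$ from the definition, note $\alpha(G) \geq 1$ since a single vertex is independent, and subtract. Your closing refinement (equality $a(G)=n(G)$ forces $m(G)=0$) is also the same observation the paper makes in passing.
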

        \begin{proof} The annihilation number $a(G)$ is bounded by the number of vertices $n(G)$ since $a(G)$ corresponds to an index in the degree sequence. The equality $a(G) = n(G)$ can only occur when the graph has no edges, i.e., $m(G) = 0$. It is worth noting that every single vertex in a graph forms an independent set, so for any graph $G$, we have $\alpha(G) \geq 1$. Therefore, this lemma holds for every graph.
        \end{proof} 
        
        \begin{lemma}\label{a_minum_alpha_leq_mu}
            If $G$ is a K\"{o}nig--Egerv\'{a}ry graph, then $0 \leq a(G) - \alpha(G) \leq \mu(G)$. Moreover, if $m(G)>0$, then $a(G) - \alpha(G) < \mu(G)$.
        \end{lemma}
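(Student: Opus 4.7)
The plan is essentially two short observations built on the König--Egerv\'ary identity $\alpha(G) + \mu(G) = n(G)$. The lower bound $0 \leq a(G) - \alpha(G)$ requires no new work: it is exactly the inequality $a(G) \geq \alpha(G)$ supplied by Theorem~1.1, and holds for every graph (not just König--Egerv\'ary graphs).

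For the upper bound, I would simply substitute the identity: the claim $a(G) - \alpha(G) \leq \mu(G)$ is equivalent to $a(G) \leq \alpha(G) + \mu(G) = n(G)$, which is immediate from the definition of the annihilation number as an index into a degree sequence of length $n(G)$. So the non-strict inequality reduces to a tautology once one invokes the defining property of the class.

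For the strict inequality under the hypothesis $m(G) > 0$, I would sharpen the previous step by looking at whether $V(G)$ itself can be annihilating. The total degree sum satisfies $\sum_{i=1}^{n(G)} d_i = 2m(G) > m(G)$ precisely when $G$ has at least one edge, so $V(G)$ fails the annihilating condition and the maximal admissible index is at most $n(G) - 1$. This gives $a(G) \leq n(G) - 1 = \alpha(G) + \mu(G) - 1$, i.e., $a(G) - \alpha(G) < \mu(G)$.

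There is no real obstacle here; the only point worth flagging is the recognition that the strict separation between $a(G)$ and $n(G)$ is driven exactly by the quantity $2m(G) - m(G) = m(G)$, so the hypothesis $m(G) > 0$ enters in the cleanest possible way.
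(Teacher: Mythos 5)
Your proposal is correct and follows the same route as the paper: the lower bound from Pepper's inequality $a(G)\geq\alpha(G)$, the upper bound from $a(G)\leq n(G)=\alpha(G)+\mu(G)$, and the strict version from observing that $m(G)>0$ forces $a(G)\leq n(G)-1$. Your justification that $V(G)$ fails the annihilating condition because $2m(G)>m(G)$ is just a slightly more explicit spelling-out of the paper's one-line remark that $a(G)<n(G)$.
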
    
         \begin{proof} Pepper \cite{Pepper2004} demonstrated that the inequality $0 \leq a(G) - \alpha(G)$ holds. As for the right side, we know that $a(G) \leq n(G) = \alpha(G) + \mu(G)$. Consequently, we have $a(G) - \alpha(G) \leq \mu(G)$.
         
         If $m(G)>0$, then $a(G) < n(G)$, which completes the proof.
        \end{proof} 
        \begin{lemma}\label{KE_alpha>=mu}
            For every K\"{o}nig--Egerv\'{a}ry graph $G$, $\mu(G) \leq \alpha(G)$.
        \end{lemma}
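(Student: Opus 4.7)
The plan is to argue directly from the definition of a König--Egerváry graph combined with the elementary fact that a matching covers $2\mu(G)$ distinct vertices. First, I would observe that for any graph $G$, since every matching consists of pairwise vertex-disjoint edges, a maximum matching saturates exactly $2\mu(G)$ vertices, so $n(G) \geq 2\mu(G)$.

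Next, I would invoke the defining property of a König--Egerváry graph, namely $\alpha(G) + \mu(G) = n(G)$. Rearranging gives $\alpha(G) = n(G) - \mu(G)$, and substituting the inequality $n(G) \geq 2\mu(G)$ yields $\alpha(G) \geq 2\mu(G) - \mu(G) = \mu(G)$, which is exactly the desired inequality.

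There is essentially no obstacle here: the whole argument is a one-line consequence of the König--Egerváry identity and the trivial bound $n(G) \geq 2\mu(G)$. If anything, the only point worth emphasizing is that $n(G) \geq 2\mu(G)$ holds with no assumption on $G$ at all, so the König--Egerváry hypothesis is used only once, to convert $n(G)$ into $\alpha(G) + \mu(G)$. One could alternatively derive the same conclusion by noting that the set of vertices not saturated by a maximum matching $M$ forms an independent set of size $n(G) - 2\mu(G)$ (else $M$ could be enlarged), which already shows $\alpha(G) \geq n(G) - 2\mu(G)$, and then combining with $\alpha(G) + \mu(G) = n(G)$ gives $\mu(G) \geq n(G) - 2\mu(G)$, hence again $\alpha(G) = n(G) - \mu(G) \geq \mu(G)$.
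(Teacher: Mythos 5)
Your argument is correct and is essentially identical to the paper's: both use the trivial bound $2\mu(G) \leq n(G)$ (matching edges are vertex-disjoint) together with the K\"{o}nig--Egerv\'{a}ry identity $\alpha(G) = n(G) - \mu(G)$ to conclude $\mu(G) \leq \alpha(G)$.
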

        \begin{proof}
        Notice that $\mu(G)$ is less than or equal to half the value of $n(G)$ because every edge $e \in M(G)$ consists of two vertices and no two edges in $M(G)$ share common vertices. Additionally, $\alpha(G) = n(G)-\mu(G)$, so we can conclude that:
        
        \[\mu(G) \leq \frac{n(G)}{2} \leq \alpha(G).\]
        \end{proof}
        In this paper, we focus on determining tight upper bounds for the difference between the annihilation number and the independence number for different types of graphs. Specifically, in Section \ref{tree_section}, we examine trees, in Section \ref{bipartite_section}, we analyze bipartite graphs, and in Section \ref{KE_section}, we investigate K\"{o}nig--Egerv\'{a}ry graphs.
        

    \section{The tight upper bound for trees}\label{tree_section}
        The initial inequality concerning the difference between the annihilation number and the independence number of trees can be stated as follows:
        \[a(T)-\alpha(T)\leq \mu(T).\] This inequality is derived from Lemma \ref{a_minum_alpha_leq_mu}, considering that a tree is a K\"{o}nig--Egerv\'{a}ry graph.\\
        
        To establish a tighter bound, we define the annihilation decomposition.

        \begin{definition}\label{definition_annihilation_decomposition}
            An \textit{annihilation decomposition} of a graph $G$ is a partition $\langle A,B \rangle$ of its vertex set to a maximum annihilation set and its complement. In what follows, we define $k\langle A,B \rangle$ as the number of edges between vertices of $A$ and $B$.
        
            \begin{figure}[H]
                \centering
                    \begin{tikzpicture}
                    \draw[] (0,0) ellipse (0.5cm and 1cm);
                    \draw[] (4,0) ellipse (0.5cm and 1cm);
                    \node at (0,0) {A};
                    \node at (4,0) {B};
                    \node at (2,0.5) {$k\langle A,B \rangle$};
                    \draw[thick] (0.7,0) -- (3.3,0) ; 
                    \draw[thick] (0.7,0.3) -- (3.3,0.3) ;
                    \draw[thick] (0.7,-0.3) -- (3.3,-0.3) ;
                \end{tikzpicture}
                \caption{An annihilation decomposition of a graph.}
                \label{fig:decomposition}
            \end{figure}
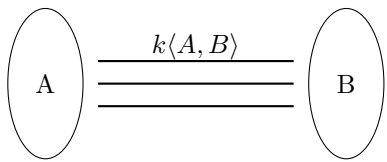
        \end{definition}
        It is important to note that in an annihilation decomposition, $|A| = a(G)$ and $|B| = n(G)-a(G)$.

        \begin{lemma}\label{lemma:ma<mb}
            In an annihilation decomposition $\langle A,B \rangle$ of a graph $G$, it holds that $m(G[A]) \leq m(G[B])$.
        \end{lemma}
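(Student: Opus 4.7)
The plan is to unpack both sides of the annihilating-set inequality $\deg(A)\leq m(G)$ in terms of the three types of edges determined by the partition $\langle A,B\rangle$. First I would split $E(G)$ into edges internal to $A$, edges internal to $B$, and crossing edges, giving the identity
\[
m(G)=m(G[A])+m(G[B])+k\langle A,B\rangle.
\]

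Next I would compute $\deg(A)=\sum_{v\in A}d(v)$ by a standard double-counting argument: every edge of $G[A]$ contributes $2$ to this sum (both endpoints lie in $A$), while every crossing edge in $k\langle A,B\rangle$ contributes exactly $1$ (only one endpoint lies in $A$), so
\[
\deg(A)=2\,m(G[A])+k\langle A,B\rangle.
\]

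Since $A$ is a maximum annihilating set, it is in particular an annihilating set, so $\deg(A)\leq m(G)$. Substituting the two identities above and cancelling $k\langle A,B\rangle$ from both sides leaves $m(G[A])\leq m(G[B])$, which is the desired inequality.

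There is no real obstacle here: the argument is a one-line consequence of double counting together with the defining inequality $\deg(A)\leq m(G)$. Note that maximality of $A$ is not actually used; any annihilating set would satisfy the same conclusion, which will likely be convenient later when comparing different annihilation decompositions.
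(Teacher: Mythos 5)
Your proof is correct and takes essentially the same route as the paper, which compares $\deg(A) \leq m(G) \leq \deg(B)$ using the same double-counting identities $\deg(A)=2m(G[A])+k\langle A,B\rangle$ and $\deg(B)=2m(G[B])+k\langle A,B\rangle$; your reformulation via $m(G)=m(G[A])+m(G[B])+k\langle A,B\rangle$ is an equivalent bookkeeping of the same computation, written out more explicitly than the paper's rather terse argument. Your closing observation that only the annihilating-set property of $A$ (not its maximality) is needed is also accurate.
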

        
        \begin{proof}
            It is known that the sum of all degrees in a graph is equal to twice the number of edges, i.e., $2m(G)$. According to the definition of the annihilation number, $a$ represents the maximum index in the degree sequence such that the sum of the degrees of vertices $v_i$ for $i \leq a$ is less than or equal to $m(G)$. These vertices are included in the set $A$.
            
            On the other hand, the sum of the degrees of vertices in set $B$ is greater than or equal to $m(G)$. Consequently, there are more (or an equal number of) edges in $G[B]$ compared to $G[A]$.
        \end{proof}

        \begin{lemma}\label{lemma:in_bipartite_a-alpha<m(A)}
            Let $G$ be a bipartite graph with an annihilation decomposition $\langle A,B \rangle$. Then $a(G) - \alpha(G) \leq m(G[A])$.
        \end{lemma}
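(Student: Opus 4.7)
The plan is to bound $\alpha(G)$ from below by producing a large independent set inside $A$ itself, then rearrange. Since $|A|=a(G)$, it suffices to show that $\alpha(G)\geq |A|-m(G[A])$.

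First I would pass to the induced subgraph $G[A]$ and observe that, because $G$ is bipartite, so is $G[A]$ (every subgraph of a bipartite graph is bipartite, as noted in the introduction). This lets me invoke K\"{o}nig's theorem on $G[A]$, giving $\tau(G[A])=\mu(G[A])$, and hence by the Gallai-type identity $\alpha(G[A])=|A|-\tau(G[A])=|A|-\mu(G[A])$.

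Next I would use the trivial bound $\mu(G[A])\leq m(G[A])$ (a matching is a subset of the edge set), which yields $\alpha(G[A])\geq |A|-m(G[A])$. Finally, any independent set of $G[A]$ is also an independent set of $G$, so $\alpha(G)\geq \alpha(G[A])\geq |A|-m(G[A])=a(G)-m(G[A])$, which rearranges to the claimed inequality.

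There is no real obstacle here; the only point that needs care is the justification of K\"{o}nig's equality on $G[A]$, which is immediate from bipartiteness being inherited by induced subgraphs. Note that the argument uses bipartiteness in an essential way through K\"{o}nig's theorem, so this proof strategy would not extend verbatim to general K\"{o}nig--Egerv\'{a}ry graphs, where the analogous statement would require a separate treatment.
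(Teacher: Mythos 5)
Your proof is correct and rests on exactly the same ingredients as the paper's: the K\"{o}nig--Egerv\'{a}ry identity $\alpha(G[A]) = |A| - \mu(G[A])$ for the bipartite induced subgraph $G[A]$, the trivial bound $\mu(G[A]) \leq m(G[A])$, and the monotonicity $\alpha(G[A]) \leq \alpha(G)$. The paper merely packages the same chain of inequalities as a proof by contradiction, whereas you argue directly, which is arguably cleaner.
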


        \begin{proof}
            First, notice that for any graph $G$, the maximum matching number $\mu(G)$ is always less than or equal to the number of edges $m(G)$ because $M \subseteq E(G)$.\\

            Let us now prove the inequality $a(G) - \alpha(G) \leq m(G[A])$ by contradiction.\\

            Suppose that $m(G[A]) < a(G) - \alpha(G)$. Then, 
            \[
             a(G) - \alpha(G[A])=n(G[A]) - \alpha(G[A])=\mu(G[A]) \leq m(G[A])< a(G) - \alpha(G).
            \]
            This implies that $\alpha(G[A]) > \alpha(G)$, which leads to a contradiction. Hence, the assumption that $m(G[A]) < a(G) - \alpha(G)$ must be false, and we conclude that $a(G) - \alpha(G) \leq m(G[A])$.
        \end{proof}
        
        In the next part of this section, our objective is to establish the tight upper bound specifically for trees. To accomplish this, we can utilize Lemma \ref{lemma:ma<mb} and Lemma \ref{lemma:in_bipartite_a-alpha<m(A)}, adapting them for trees by substituting $T$ in place of $G$.\\
        
        Let $T$ be a tree with annihilation decomposition $\langle A, B \rangle$. The sum of the degrees in $A$ is equal to twice the number of edges in $A$ (since each edge has two endpoints in $A$), which is denoted as $2m(T[A])$ plus the edges between $A$ and $B$ (counted once), which is equal to $k\langle A,B \rangle$. Using Lemma \ref{lemma:ma<mb}, we can obtain the following inequality:
         
        \[\color{blue}2m(T[A]) + k\langle A,B \rangle \leq \color{black}m(T) \leq \color{brown}2m(T[B]) + k\langle A,B \rangle \color{black}.\]
        
        By employing Lemma \ref{lemma:in_bipartite_a-alpha<m(A)}, we can establish the inequality $a(T) - \alpha(T) \leq m(T[A])$ on the left side of the aforementioned inequality. Consequently, we can express it as:
        \[\color{blue}2(a(T) - \alpha(T)) + k\langle A,B \rangle \leq 2(m(T[A])) + k\langle A,B \rangle \color{black}.\]
        
        On the right side, since $T[B]$ is a subgraph of a tree, it can be regarded as a forest, and the maximum number of edges it can have is $n(T[B])-1$. Thus, we can write:
        \[\color{brown}2m(T[B]) + k\langle A,B \rangle \leq 2(n(T[B])-1) + k\langle A,B \rangle \color{black}.\]
        
        By combining the above inequalities, we obtain the following:
        
        \begin{theorem}
            For every tree $T$, $a(T) - \alpha(T) \leq \frac{\mu(T)}{2} - \frac{1}{2}$.
        \end{theorem}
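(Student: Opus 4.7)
The plan is to chain together the two displayed inequalities the paper has just derived and then exploit the K\"onig--Egerv\'ary identity available for trees. Concretely, from the left bound $2(a(T)-\alpha(T)) + k\langle A,B\rangle \leq 2m(T[A]) + k\langle A,B\rangle$ and the right bound $2m(T[B]) + k\langle A,B\rangle \leq 2(n(T[B])-1) + k\langle A,B\rangle$, together with $2m(T[A])+k\langle A,B\rangle \leq m(T) \leq 2m(T[B])+k\langle A,B\rangle$ from Lemma \ref{lemma:ma<mb}, I would concatenate into
\[
2(a(T)-\alpha(T)) + k\langle A,B\rangle \;\leq\; 2(n(T[B])-1) + k\langle A,B\rangle.
\]

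Next I would cancel the common term $k\langle A,B\rangle$ from both sides, yielding $a(T)-\alpha(T) \leq n(T[B]) - 1$. Since $\langle A,B\rangle$ is an annihilation decomposition, $n(T[B]) = n(T) - a(T)$, so
\[
a(T) - \alpha(T) \;\leq\; n(T) - a(T) - 1.
\]
Rearranging gives $2(a(T)-\alpha(T)) \leq n(T) - \alpha(T) - 1$.

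Finally I would invoke the fact that every tree is a K\"{o}nig--Egerv\'{a}ry graph, so $n(T) = \alpha(T) + \mu(T)$, which turns $n(T) - \alpha(T)$ into $\mu(T)$. This yields $2(a(T)-\alpha(T)) \leq \mu(T) - 1$, and dividing by $2$ gives the claimed bound $a(T)-\alpha(T) \leq \frac{\mu(T)}{2} - \frac{1}{2}$.

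There is no real obstacle here once the two displayed bounds are in hand; the proof is essentially an arithmetic concatenation. The only place a reader might hesitate is the use of $m(T[B]) \leq n(T[B]) - 1$, which holds because $T[B]$, being an induced subgraph of a tree, is a forest and therefore has at most $n(T[B])-1$ edges (we even lose a little when $T[B]$ is disconnected). Everything else follows from substitutions and the K\"{o}nig--Egerv\'{a}ry identity.
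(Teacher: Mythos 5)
Your proposal is correct and follows essentially the same route as the paper's own proof: chaining the two displayed bounds through $m(T)$, cancelling $k\langle A,B\rangle$, using $n(T[B]) = n(T) - a(T)$, and finishing with the K\"{o}nig--Egerv\'{a}ry identity $n(T) = \alpha(T) + \mu(T)$. The only difference is the order of the final algebraic steps (you rearrange before substituting the identity, the paper substitutes first), which is immaterial.
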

            
        \begin{proof}
            Based on the aforementioned calculations, we can represent these inequalities in the following manner:\\
            \[Deg(A) = \color{blue}2m(T[A])  + k\langle A,B \rangle \color{black} \leq m(T) \leq \color{brown} 2m(T[B]) + k\langle A,B \rangle \color{black} = Deg(B)
            \]
            \[\color{blue}2(a(T) - \alpha(T)) + k\langle A,B \rangle \leq 2m(T[A])  + k\langle A,B \rangle \color{black} \leq m(T) \leq \] \[ \leq \color{brown} 2m(T[B]) + k\langle A,B \rangle \leq 2(n(T[B])-1) + k\langle A,B \rangle \color{black}.\]
            Therefore,
            \[\color{blue}2(a(T) - \alpha(T)) + k\langle A,B \rangle \color{black}\leq \color{brown}2(n(T[B])-1) + k\langle A,B \rangle \color{black}\]
            \[a(T) - \alpha(T) \leq n(T[B])-1\]
            \[a(T) - \alpha(T) \leq n(T)-a(T)-1\]
            \[a(T) - \alpha(T) \leq \alpha(T) + \mu(T) - a(T) - 1\]
            \[2(a(T) - \alpha(T)) \leq \mu(T) - 1\]
            \[a(T) - \alpha(T) \leq \frac{\mu(T)}{2} - \frac{1}{2}.\]
        \end{proof}
        
        \begin{theorem}
            The inequality $a(T) - \alpha(T) \leq \frac{\mu(T)}{2} - \frac{1}{2}$ represents the tight bound. In other words, there exist a tree for which $a(T) - \alpha(T) = \frac{\mu(T)}{2} - \frac{1}{2}$.
        \end{theorem}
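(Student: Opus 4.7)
The strategy is to exhibit, for every positive integer $k$, a tree $T_k$ with $\mu(T_k)=2k+1$ and $a(T_k)-\alpha(T_k)=k$, so that equality in the bound is realized for infinitely many values of the matching number. The construction is reverse-engineered from the equality analysis implicit in the previous proof: every inequality in its chain collapses to equality precisely when $T[B]$ is a connected subtree, $T[A]$ is a disjoint union of edges and isolated vertices, and a maximum independent set of $T[A]$ is already maximum in the full tree.

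Concretely, I would define $T_k$ as the tree obtained from a path $b_1 b_2 \cdots b_{k+1}$ by attaching a pendant leaf $\ell_j$ to every vertex $b_j$ for $1\le j\le k+1$, and attaching a pendant path $b_j$--$x_j$--$y_j$ at $b_j$ for each $1\le j\le k$. A direct count gives $n(T_k)=4k+2$ and $m(T_k)=4k+1$, and the intended annihilation decomposition is $A=\{\ell_j\}\cup\{x_j,y_j\}$ (so $T[A]$ is exactly $k$ disjoint edges together with $k+1$ isolated vertices) and $B=\{b_1,\ldots,b_{k+1}\}$ (an induced path, in particular a subtree).

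The verification then consists of three direct steps. First, I exhibit the explicit perfect matching $\{b_j\ell_j:1\le j\le k+1\}\cup\{x_jy_j:1\le j\le k\}$, which forces $\mu(T_k)=2k+1$; since $T_k$ is a tree, and hence a K\"{o}nig--Egerv\'{a}ry graph, $\alpha(T_k)=n(T_k)-\mu(T_k)=2k+1$. Second, I write down the degree multiset of $T_k$ --- namely $2k+1$ ones (the $\ell_j$'s together with the $y_j$'s), $k+1$ twos ($b_{k+1}$ together with the $x_j$'s), one three ($b_1$), and $k-1$ fours ($b_2,\ldots,b_k$) --- and check that the first $3k+1$ entries of the sorted sequence sum to exactly $m(T_k)=4k+1$, while the next entry (another $2$) pushes the partial sum strictly above $m(T_k)$; hence $a(T_k)=3k+1$. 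Third, combining these values gives $a(T_k)-\alpha(T_k)=(3k+1)-(2k+1)=k=\tfrac{\mu(T_k)-1}{2}$, which is the desired equality.

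I do not expect any serious obstacle. Once the construction is in hand, every step above is mechanical degree-sequence bookkeeping and a one-line display of a perfect matching; the only substantive work lies in the construction itself, which is essentially dictated by the equality conditions distilled from the proof of the previous theorem.
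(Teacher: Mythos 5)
Your construction is correct, and every numerical claim checks out: $n(T_k)=4k+2$, the exhibited matching is perfect so $\mu(T_k)=2k+1$ and $\alpha(T_k)=2k+1$ by the K\"{o}nig--Egerv\'{a}ry property, the degree multiset is as you list it (sum $8k+2=2m(T_k)$), the first $3k+1$ sorted degrees sum to exactly $4k+1=m(T_k)$ while the $(3k+2)$-nd entry is another $2$, so $a(T_k)=3k+1$ and $a(T_k)-\alpha(T_k)=k=\frac{\mu(T_k)-1}{2}$. However, your route is considerably more than the paper does for this particular statement: since the theorem only asserts the existence of \emph{one} tree attaining equality, the paper simply takes the star $K_{1,5}$, for which $a=\alpha=5$, $\mu=1$, and both sides of the inequality equal $0$. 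Your caterpillar family (a spine $b_1\cdots b_{k+1}$ with a pendant leaf at each spine vertex and a pendant $P_2$ at the first $k$ of them) instead realizes equality for every odd value $\mu=2k+1$, which is essentially the content of the paper's \emph{subsequent} lemmas and its theorem on the family with $\mu=2q+1$ --- there the authors use a different extremal tree, a spider with one hub of degree $2q+1$, $2q$ legs of length two and the rest of length one, but with the same invariants $n=4q+2$, $a=3q+1$, $\alpha=2q+1$. So your proof buys the stronger ``tight for every admissible $\mu$'' conclusion in one construction at the cost of more bookkeeping, while the paper's proof of this statement is a one-line example and defers the infinite-family analysis to later results.
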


        \begin{proof}
        Let us take a star graph with $6$ vertices. In this case, we have $a(T) = 5$, $\alpha(T) = 5$, and $\mu(T) = 1$. As demonstrated, the equality holds for this tree.
        
        \begin{figure}[H]
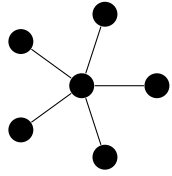

            \centering
            \stargraph{5}{1}
            \caption{A star graph with 6 vertices.}
            \label{fig:star_graph}
        \end{figure}
        
        \end{proof}
        
        Notice that we can expand and characterize the aforementioned graph as follows.
        
        \begin{lemma}
            There exists an infinite family of graphs that satisfy the equality $a(T) - \alpha(T) = \frac{\mu(T)}{2} - \frac{1}{2}$ with $\mu(T) = 1$.
        \end{lemma}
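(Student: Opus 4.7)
The plan is to exhibit the family of star graphs $K_{1,n}$ for $n \geq 1$ and verify that every member satisfies the equality. Since $\mu(T)=1$ forces the tree to have no two vertex-disjoint edges, every tree with matching number $1$ is itself a star, so this family is in fact the unique one realizing $\mu(T)=1$.

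First, I would fix $n \geq 1$ and consider $T = K_{1,n}$, i.e.\ one center vertex $c$ adjacent to $n$ leaves. I would then read off the three parameters directly from the structure: $m(T)=n$, the degree sequence is $\underbrace{1,1,\ldots,1}_{n},\, n$, and a maximum matching consists of a single edge (any edge), so $\mu(T)=1$. The set of leaves is independent of size $n$, and no independent set can include the center, giving $\alpha(T)=n$.

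Next, I would compute $a(T)$. Summing the first $n$ entries of the degree sequence yields $n \leq m(T)=n$, which is admissible, while adding the next entry (the degree $n$ of the center) gives $2n > n$ for $n \geq 1$. Hence $a(T)=n$, and therefore $a(T)-\alpha(T)=0=\tfrac{1}{2}-\tfrac{1}{2}=\tfrac{\mu(T)}{2}-\tfrac{1}{2}$, establishing the equality for every $n \geq 1$. Since the family $\{K_{1,n}\}_{n\geq 1}$ is infinite, this proves the lemma.

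There is essentially no obstacle here beyond writing the verification cleanly; the only subtlety worth mentioning is the uniqueness observation above, which confirms that the star family is the natural and in fact the only choice when insisting on $\mu(T)=1$.
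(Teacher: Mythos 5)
Your proposal is correct and follows essentially the same route as the paper: both exhibit the infinite family of stars and verify $a(T)=\alpha(T)$ and $\mu(T)=1$ directly, so the difference is $0=\frac{1}{2}-\frac{1}{2}$. Your write-up merely adds explicit detail (the degree-sequence computation of $a(T)$ and the observation that stars are the only trees with matching number $1$), which the paper leaves implicit.
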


        \begin{proof}
        Let us take a star graph of size $n \geq 2$. It has $a(T) = n(T)-1$, $\alpha(T) = n(T)-1$, and $\mu(T) = 1$. Thus, we can observe that $a(T) - \alpha(T) = \frac{\mu(T)}{2} - \frac{1}{2}$.
        \end{proof}
        
        \begin{lemma}
            There exists an infinite family of graphs that satisfy the equality $a(T) - \alpha(T) = \frac{\mu(T)}{2} - \frac{1}{2}$ with $\mu(T) = 3$.
        \end{lemma}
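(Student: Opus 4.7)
The plan is to exhibit an explicit infinite family of trees $\{T_b : b \geq 1\}$ all having $\mu(T_b) = 3$ and $a(T_b) - \alpha(T_b) = 1$, thereby attaining equality in the bound $\frac{\mu(T)}{2} - \frac{1}{2}$. The proposed construction is a caterpillar: take a spine path $s_1 s_2 s_3$ and attach one pendant leaf $\ell_1$ to $s_1$, one pendant leaf $\ell_3$ to $s_3$, and $b$ pendant leaves $m_1, \ldots, m_b$ to the central vertex $s_2$. Then $n(T_b) = b + 5$, $m(T_b) = b + 4$, and the sorted degree sequence is
\[
\underbrace{1, 1, \ldots, 1}_{b+2},\; 2,\; 2,\; b+2.
\]

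Next I would verify the three parameters. The set of all $b+2$ leaves is independent, so $\alpha(T_b) \geq b+2$; conversely, any independent set $S$ either contains $s_2$ (forcing $S \subseteq \{s_2, \ell_1, \ell_3\}$, hence $|S| \leq 3 \leq b+2$) or omits $s_2$, in which case $S$ contributes at most one vertex from each of the edges $s_1 \ell_1$ and $s_3 \ell_3$ together with at most $b$ leaves of $s_2$, totaling at most $b+2$. Thus $\alpha(T_b) = b+2$, and since every tree is König--Egerv\'ary we obtain $\mu(T_b) = n(T_b) - \alpha(T_b) = 3$ (alternatively, $\{s_1 \ell_1,\, s_2 m_1,\, s_3 \ell_3\}$ is an explicit witness matching). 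For the annihilation number, the prefix sums of the sorted degree sequence are $1, 2, \ldots, b+2, b+4, b+6$: since $b+4 = m(T_b)$ while $b+6 > m(T_b)$, we read off $a(T_b) = b+3$. Consequently $a(T_b) - \alpha(T_b) = 1 = \frac{\mu(T_b)}{2} - \frac{1}{2}$ for every $b \geq 1$, and distinct values of $b$ produce trees of distinct orders, so the family is infinite.

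The main (and essentially only) difficulty is picking the correct family; once the construction is fixed, the verification is routine arithmetic. The guiding intuition is the same as in the star example settling the $\mu = 1$ case: one needs a single vertex of high degree so that the last entry in the sorted sequence single-handedly prevents the prefix sum from absorbing an extra vertex, while $\mu$ is forced to be exactly $3$ by attaching two additional ``matchable ears'' $s_1 \ell_1$ and $s_3 \ell_3$ off the high-degree center $s_2$.
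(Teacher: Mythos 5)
Your caterpillar is exactly the family the paper exhibits: a center of degree $n-3$ with $n-5$ pendant leaves and two degree-$2$ neighbors each carrying a pendant leaf, giving the same degree sequence $1,\ldots,1,2,2,n-3$ and the same values $a(T)=n-2$, $\alpha(T)=n-3$, $\mu(T)=3$. Your verification of $\alpha$, $\mu$, and the prefix sums is correct and in fact more detailed than the paper's, so the proposal is correct and takes essentially the same approach.
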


        \begin{proof}

        For graph with $n$ vertices ($n \geq 6$) having the structure shown in Fig. \ref{fig:mu3}, the degree sequence can be represented as follows:
        \[\underbrace{1,\cdots,1}_\text{$n-3$},
        \underbrace{2,2}_\text{$2$},
        \underbrace{n-3}_\text{$1$}.\]
        \begin{figure}[H]
            \centering
            \begin{tikzpicture}
                \tikzstyle{black}=[fill=black, draw=black, shape=circle]
        		\node [fill=red, draw=black, shape=circle] (0) at (0, 0) {};
        		\node [style=black] (1) at (2, 0) {};
        		\node [style=black] (2) at (3, 0) {};
        		\node [style=black] (3) at (4, 0) {};
        		\node [style=black] (4) at (1, 2) {};
        		\node (100) at (0, -0.5) {Optional vertices};
        		\node [style=black] (5) at (3, -1.75) {};
        		\node [style=black] (6) at (4, -1.75) {};
        		\draw [dotted, red] (0) to (1);
        		\draw (4) to (3);
        		\draw (4) to (2);
        		\draw (4) to (1);
        		\draw [dashed, red](4) to (0);
        		\draw (2) to (5);
        		\draw (3) to (6);
            \end{tikzpicture}
            \caption{An example of a graph with $\mu(T) = 3$ where the tight bound is achieved.}
            \label{fig:mu3}
        \end{figure}
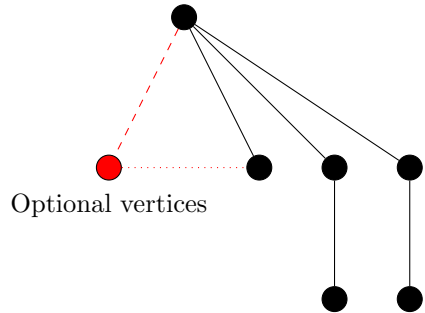
        
        For such graphs, it has $m(T) = n(T)-1$, $a(T) = n(T)-2$, $\alpha(T) = n(T)-3$, and $\mu(T) = 3$. This configuration satisfies the tight bound $a(T) - \alpha(T) = \frac{\mu(T)}{2} - \frac{1}{2}$.
        \end{proof}

        \begin{lemma}
            There exists an infinite family of graphs that satisfy the equality $a(T) - \alpha(T) = \frac{\mu(T)}{2} - \frac{1}{2}$ with $\mu(T) = 5$.
        \end{lemma}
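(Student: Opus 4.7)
The plan is to extend the subdivided-star construction from the $\mu(T)=3$ case by subdividing four edges instead of two. For every integer $n \geq 10$, I would take a central vertex $v$ with $n-9$ pendant neighbors and $4$ additional neighbors $u_1,\dots,u_4$, each of which in turn has one pendant neighbor $\ell_i$; equivalently, start with a star $K_{1,n-5}$ centered at $v$ and subdivide exactly four of its edges. The resulting tree $T_n$ has degree sequence
\[
\underbrace{1,\dots,1}_{n-5},\; \underbrace{2,2,2,2}_{4},\; n-5,
\]
which sums to $2(n-1)$ and confirms $m(T_n)=n-1$.

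First I would compute $a(T_n)$ directly from the degree sequence: the first $n-5$ ones contribute $n-5$, the next two twos raise the partial sum to $n-1=m(T_n)$, and one more entry overshoots, giving $a(T_n)=n-3$. Next, I would determine $\alpha(T_n)$ by a case analysis on whether $v$ belongs to a maximum independent set $S$: if $v\in S$ then $S\subseteq\{v,\ell_1,\ell_2,\ell_3,\ell_4\}$ and $|S|\leq 5$; if $v\notin S$ then one can include all $n-9$ direct leaves of $v$ together with at most one vertex from each pair $\{u_i,\ell_i\}$, giving $|S|\leq n-5$. For $n\geq 10$ the second bound dominates and is attained, so $\alpha(T_n)=n-5$. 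Since $T_n$ is a tree and therefore K\"onig--Egerv\'ary, $\mu(T_n)=n-\alpha(T_n)=5$, which is realized explicitly by one edge from $v$ to any direct leaf together with the four edges $u_i\ell_i$.

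Combining these values yields $a(T_n)-\alpha(T_n)=(n-3)-(n-5)=2=\frac{\mu(T_n)}{2}-\frac{1}{2}$, so $\{T_n : n\geq 10\}$ is the desired infinite family. The only subtle step is the case analysis for $\alpha(T_n)$: the bound $n-5$ only dominates the bound $5$ once $v$ has at least one direct leaf neighbor, which is exactly why the threshold $n\geq 10$ is imposed. Otherwise the argument is routine bookkeeping on the degree sequence, and the same recipe (subdividing $2k$ edges of a suitably large star) should generalize to any odd $\mu = 2k+1$.
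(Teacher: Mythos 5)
Your construction is exactly the family the paper uses (a star $K_{1,n-5}$ with four edges subdivided, yielding the degree sequence $1,\dots,1,2,2,2,2,n-5$), and your verification of $a(T_n)=n-3$, $\alpha(T_n)=n-5$, $\mu(T_n)=5$ matches the paper's claimed values while supplying the case analysis the paper omits. The proposal is correct and takes essentially the same approach.
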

        \begin{proof}
        For any graph with $n$ vertices ($n \geq 10$) having the structure shown in Fig. \ref{fig:mu5}, the degree sequence can be represented as follows:
        \[\underbrace{1,\cdots,1}_\text{$n-5$},
        \underbrace{2,\cdots,2}_\text{$4$},
        \underbrace{n-5}_\text{$1$}.\]        
        \begin{figure}[H]
            \centering
            \begin{tikzpicture}
            
                \tikzstyle{black}=[fill=black, draw=black, shape=circle]
                
        		\node [fill=red, draw=black, shape=circle] (0) at (0, 0) {};
        		\node [style=black] (1) at (1, 2) {};
        		\node [style=black] (2) at (2, 0) {};
        		\node [style=black] (3) at (3, 0) {};
        		\node [style=black] (4) at (4, 0) {};
        		\node [style=black] (5) at (5, 0) {};
        		\node [style=black] (6) at (6, 0) {};
        		\node [style=black] (7) at (6, -2) {};
        		\node [style=black] (8) at (5, -2) {};
        		\node [style=black] (9) at (4, -2) {};
        		\node [style=black] (10) at (3, -2) {};
        		\node (100) at (0, -0.5) {Optional vertices};

        		\draw [dotted, red] (0) to (2);
        		\draw [dashed, red](0) to (1);
        		\draw (1) to (6);
        		\draw (1) to (5);
        		\draw (1) to (4);
        		\draw (1) to (3);
        		\draw (1) to (2);
        		\draw (6) to (7);
        		\draw (5) to (8);
        		\draw (4) to (9);
        		\draw (3) to (10);
            \end{tikzpicture}
            \caption{An example of a graph with $\mu(T) = 5$ where the tight bound is achieved.}
            \label{fig:mu5}
        \end{figure}
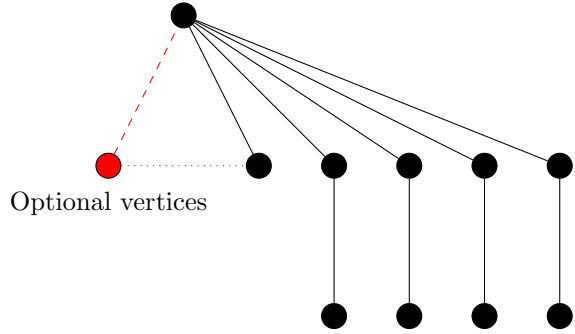

        For such graphs, it has $m(T) = n(T)-1$, $a(T) = n(T)-3$, $\alpha(T) = n(T)-5$, and $\mu(T) = 5$. Thus, this configuration satisfies the tight bound $a(T) - \alpha(T) = \frac{\mu(T)}{2} - \frac{1}{2}$.
        \end{proof}
        \begin{theorem}
            There exists an infinite family of graphs that satisfy the equality $a(T) - \alpha(T) = \frac{\mu(T)}{2} - \frac{1}{2}$ with $\mu(T) = 2q+1$, where $q$ is a positive integer.
        \end{theorem}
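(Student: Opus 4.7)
The plan is to generalize the explicit constructions used for $\mu=1,3,5$ to all odd matching numbers $\mu=2q+1$. For every positive integer $q$ and every $n \geq 4q+2$, I would define a tree $T_{q,n}$ by taking a central vertex $c$, attaching $2q$ disjoint paths of length two at $c$ (so $c$ has $2q$ neighbors of degree two, each carrying a pendant leaf), and attaching an additional $n-4q-1$ leaves directly to $c$. The resulting graph is clearly a tree on $n$ vertices, and the structure faithfully extends the patterns shown in Figures~\ref{fig:mu3} and~\ref{fig:mu5}. Letting $n$ grow yields the desired infinite family for each fixed $q$.

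The next step is to read off the degree sequence
\[
\underbrace{1,\ldots,1}_{n-2q-1},\ \underbrace{2,\ldots,2}_{2q},\ n-2q-1,
\]
from which $m(T_{q,n}) = n-1$ follows immediately. A maximum matching is obtained by pairing each degree-two vertex with its pendant and then matching $c$ to any remaining leaf, giving $\mu(T_{q,n}) = 2q+1$. Since every tree is a K\"onig--Egerv\'ary graph, $\alpha(T_{q,n}) = n - \mu = n-2q-1$. The annihilation number is then the largest prefix of the sorted degree sequence whose sum does not exceed $m = n-1$: the $n-2q-1$ ones contribute $n-2q-1$, after which exactly $q$ of the twos can be absorbed (reaching $n-1$ on the nose), while including a $(q{+}1)$-th two would push the sum to $n+1 > m$. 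Hence $a(T_{q,n}) = n-q-1$.

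Combining these values gives
\[
a(T_{q,n}) - \alpha(T_{q,n}) = (n-q-1) - (n-2q-1) = q = \frac{(2q+1)-1}{2} = \frac{\mu(T_{q,n})}{2} - \frac{1}{2},
\]
which is precisely the equality claimed. The only delicate point is bookkeeping: one must verify that the vertex count $1 + 2q + 2q + (n-4q-1) = n$ and the edge count $(n-2q-1) + 2q = n-1$ align, and that the indicated matching is maximum (which follows since it covers $2(2q+1)$ vertices in a tree with $\alpha = n-2q-1$). The main conceptual obstacle---if any---is recognizing the correct counting step at the annihilation number, namely that the sum of degrees saturates $m$ exactly when one adds $q$ of the twos; after that, the identity falls out mechanically.
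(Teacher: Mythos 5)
Your proof is correct and uses essentially the same construction as the paper: a spider tree with a high-degree center, $2q$ attached paths of length two, and extra pendant leaves, yielding $a-\alpha=q=\frac{\mu}{2}-\frac{1}{2}$. If anything, your version is slightly more complete, since by letting $n$ range over all values $\geq 4q+2$ you exhibit an infinite family for each fixed $q$, whereas the paper's written argument pins down only the case $n=4q+2$ (relegating the ``optional vertices'' to its figures).
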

        \begin{proof}
        We can deduce the following from the previous three lemmas:
        \begin{itemize}
            \item The number of vertices is $n(T) = 4q+2$,
            \item The independence number is $\alpha(T) = 2q+1$,
            \item There is a root of degree $2q+1$,
            \item There are $2q$ vertices of degree $2$,
            \item There are $2q+1$ vertices with degree $1$.
        \end{itemize}

        $T$ is a tree, so $m(T) = 4q+1$. Now, let us consider the sum of the degrees of the vertices until we reach $m(T)$. There are $2q+1$ vertices that are leaves, and we select $q$ vertices from the $2q$ vertices with degree $2$. Therefore, the sum becomes:
        \[(2q+1)\cdot 1 + q\cdot2 = 4q+1\]
        and $a(T) = 3q+1$. Substituting these values into the equation, we get,
        \[\underbrace{3q+1}_\text{a(T)} - \underbrace{2q+1}_\text{$\alpha(T)$} = \underbrace{\frac{2q+1}{2}}_\text{$\frac{\mu(T)}{2}$} - \frac{1}{2}.\]
        \end{proof}
        \begin{observation}
                $\mu(T)$ can only be an odd number in this case. This is because the difference on the left side of the equality, $a(T) - \alpha(T)$, is an integer, and the right side, $\frac{\mu(T)}{2} - \frac{1}{2}$, must also be an integer. Since $\frac{\mu(T)}{2}- \frac{1}{2}$ is not an integer when $\mu(T)$ is even, the only possibility is for $\mu(T)$ to be an odd number.
        \end{observation}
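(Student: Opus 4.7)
The plan is a short parity argument that requires no new machinery; the statement essentially says: if an integer equals $\tfrac{\mu(T)-1}{2}$, then $\mu(T)-1$ must be even. So I would first rewrite the target equality in the cleaner form
\[
a(T) - \alpha(T) = \frac{\mu(T) - 1}{2},
\]
which is obtained by combining $\tfrac{\mu(T)}{2} - \tfrac{1}{2}$ into a single fraction.

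Next, I would invoke the fact that both $a(T)$ and $\alpha(T)$ are nonnegative integers, since $a(T)$ is an index in the degree sequence (hence an integer in $\{0,1,\dots,n(T)\}$) and $\alpha(T)$ is the cardinality of a vertex set. Therefore the left-hand side of the equality is an integer, forcing the right-hand side $\tfrac{\mu(T)-1}{2}$ to be an integer as well. The final step is just the elementary observation that a half-integer $\tfrac{k-1}{2}$ lies in $\mathbb{Z}$ if and only if $k-1$ is even, equivalently $k$ is odd. Setting $k = \mu(T)$ gives that $\mu(T)$ must be odd, which is the claim.

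The ``main obstacle'' is really cosmetic rather than mathematical: I want to phrase the argument so it reads as a genuine parity observation rather than a tautology. I would make that emphasis explicit by noting the converse direction implicit in the Theorem just proved (an infinite family achieving the bound exists for every odd $\mu(T) = 2q+1$), so that the observation pins down the complete parity characterization of when equality can occur: $\mu(T)$ odd is both necessary (by this parity argument) and sufficient (by the preceding construction). No case analysis, induction, or structural tree argument is needed.
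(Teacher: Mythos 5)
Your proposal is correct and is essentially identical to the paper's own justification: both argue that $a(T)-\alpha(T)$ is an integer, so $\frac{\mu(T)}{2}-\frac{1}{2}=\frac{\mu(T)-1}{2}$ must be an integer, forcing $\mu(T)$ to be odd. The extra remark about sufficiency via the constructed families for $\mu(T)=2q+1$ is a nice complement but does not change the argument.
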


        \begin{lemma}
            There exist bipartite graphs that do not satisfy the inequality $a(G) - \alpha(G) \leq \frac{\mu(G)}{2} - \frac{1}{2}$.
        \end{lemma}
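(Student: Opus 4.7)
The plan is to exhibit an explicit bipartite graph whose annihilation--independence gap overshoots $\frac{\mu-1}{2}$. The tree proof crucially exploited $m(T[B])\leq n(T[B])-1$ (the forest inequality) for the complement $B$ of a maximum annihilating set $A$; in a bipartite graph $G[B]$ can have up to $\frac{|B|^{2}}{4}$ edges, so that step is exactly what must fail on a dense bipartite $B$. Accordingly, I would engineer a graph in which the maximum annihilating set consists of many low-degree vertices while $B$ hides a dense bipartite subgraph.

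Concretely, I would take $G_r=K_{r,r}\cup r^{2}K_{2}$, a disjoint union of a complete bipartite graph and $r^{2}$ isolated edges; this is bipartite for every $r\geq 1$. Reading off the parameters: the degree sequence is $\underbrace{1,\dots,1}_{2r^{2}},\underbrace{r,\dots,r}_{2r}$ and $m(G_r)=r^{2}+r^{2}=2r^{2}$. Since both $K_{r,r}$ and $K_{2}$ are K\"{o}nig--Egerv\'{a}ry, so is their disjoint union, and additivity over components yields $\alpha(G_r)=r+r^{2}$ and $\mu(G_r)=r+r^{2}$.

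For the annihilation number, I would observe that the sum of all $2r^{2}$ ones equals exactly $m(G_r)$, and any further vertex contributes degree $r\geq 1$, which immediately pushes the partial sum over $m(G_r)$; hence $a(G_r)=2r^{2}$. Substituting, $a(G_r)-\alpha(G_r)=2r^{2}-(r^{2}+r)=r(r-1)$, while the tree-style bound would read $\frac{\mu(G_r)-1}{2}=\frac{r^{2}+r-1}{2}$. A short algebraic comparison shows $r(r-1)>\frac{r^{2}+r-1}{2}$ iff $r^{2}-3r+1>0$, which holds for every integer $r\geq 3$. Hence $G_{3}=K_{3,3}\cup 9K_{2}$ already works as a counterexample: $a(G_{3})-\alpha(G_{3})=6>\frac{11}{2}=\frac{\mu(G_{3})-1}{2}$.

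The main obstacle is the design of the construction, not the arithmetic. The key calibration is $q=r^{2}$: it makes the degree-$1$ vertices exactly saturate the edge budget, so the unique maximum annihilating set is precisely the set of all leaves and contains nothing from the dense part. The $K_{r,r}$-component therefore contributes a fixed slack of $r(r-1)$ to $a-\alpha$ that grows quadratically in $r$, whereas $\mu$ grows only linearly in the extra matching edges needed to amplify the gap, which is why the tree bound must eventually be violated.
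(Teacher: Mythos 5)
Your proposal is correct: for $G_r=K_{r,r}\cup r^{2}K_{2}$ the degree sequence, $m(G_r)=2r^{2}$, $\alpha(G_r)=\mu(G_r)=r^{2}+r$ and $a(G_r)=2r^{2}$ all check out, and $r(r-1)>\frac{r^{2}+r-1}{2}$ indeed holds exactly when $r^{2}-3r+1>0$, i.e.\ for all $r\geq 3$. The approach is the same in kind as the paper's --- exhibit an explicit bipartite counterexample built from a dense complete-bipartite core plus many degree-one vertices calibrated so that the low-degree vertices exactly exhaust the edge budget --- but the realizations differ. The paper gives a single \emph{connected} $32$-vertex graph (a $K_{4,4}$ core with pendant and matching attachments) with $a=25$, $\alpha=\mu=16$, whereas you give a disconnected infinite family whose parameters are trivial to verify by additivity over components, and which moreover shows the violation $a-\alpha-\frac{\mu-1}{2}=\frac{r^2-3r+1}{2}$ grows without bound. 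What your version gives up is connectivity: the paper's example shows the tree bound already fails for connected bipartite graphs, which your disjoint unions do not address (though the lemma as stated does not require it). Your diagnosis of \emph{why} the tree argument breaks --- the forest bound $m(G[B])\leq n(G[B])-1$ is replaced by $m(G[B])\leq |B|^{2}/4$ for bipartite $G[B]$ --- is exactly the mechanism the paper exploits in Section \ref{bipartite_section}.
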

        
        \begin{proof}
        Let us take a bipartite graph $G$ with $V(G) = A \cup B \cup C$, where $|A|=16, |B|=8, |C|=8$. $A$ (red vertices in Figure \ref{fig:Bipartite_graph_not_satisfying_the_tree_inequality}) and $B$ (blue vertices in Figure \ref{fig:Bipartite_graph_not_satisfying_the_tree_inequality}) are independent sets, and $C$ (black vertices in Figure \ref{fig:Bipartite_graph_not_satisfying_the_tree_inequality}) is a complete bipartite graph $K_{4,4}$. Half of the vertices from $A$ are connected to $B$ by a matching, while another half of $A$ are connected to $C$ by a matching. Additionally, each vertex from $B$ is connected to one vertex from $C$.
            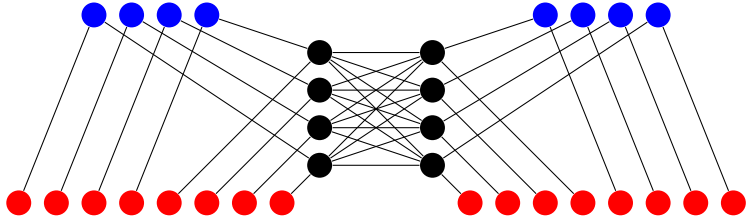
\begin{figure}[H]
                \centering
                \begin{tikzpicture}[scale=0.5]
            		\node [fill=black,circle] (20) at (11, 1) {};
            		\node [fill=black,circle] (21) at (8, 1) {};
            		\node [fill=black,circle] (22) at (8, 2) {};
            		\node [fill=black,circle] (23) at (8, 3) {};
            		\node [fill=black,circle] (24) at (8, 4) {};
            		\node [fill=black,circle] (25) at (11, 4) {};
            		\node [fill=black,circle] (26) at (11, 3) {};
            		\node [fill=black,circle] (27) at (11, 2) {};
            		\node [fill=red,circle] (28) at (12, 0) {};
            		\node [fill=red,circle] (29) at (13, 0) {};
            		\node [fill=red,circle] (30) at (14, 0) {};
            		\node [fill=red,circle] (31) at (15, 0) {};
            		\node [fill=red,circle] (32) at (16, 0) {};
            		\node [fill=red,circle] (33) at (17, 0) {};
            		\node [fill=red,circle] (34) at (18, 0) {};
            		\node [fill=red,circle] (35) at (19, 0) {};
            		\node [fill=red,circle] (36) at (7, 0) {};
            		\node [fill=red,circle] (37) at (6, 0) {};
            		\node [fill=red,circle] (38) at (5, 0) {};
            		\node [fill=red,circle] (39) at (4, 0) {};
            		\node [fill=red,circle] (40) at (3, 0) {};
            		\node [fill=red,circle] (41) at (2, 0) {};
            		\node [fill=red,circle] (42) at (1, 0) {};
            		\node [fill=red,circle] (43) at (0, 0) {};
            		\node [fill=blue,circle] (44) at (17, 5) {};
            		\node [fill=blue,circle] (45) at (16, 5) {};
            		\node [fill=blue,circle] (46) at (15, 5) {};
            		\node [fill=blue,circle] (47) at (14, 5) {};
            		\node [fill=blue,circle] (48) at (2, 5) {};
            		\node [fill=blue,circle] (49) at (3, 5) {};
            		\node [fill=blue,circle] (50) at (4, 5) {};
            		\node [fill=blue,circle] (51) at (5, 5) {};
            		\draw (21) to (20);
            		\draw (21) to (27);
            		\draw (21) to (26);
            		\draw (21) to (25);
            		\draw (22) to (20);
            		\draw (22) to (27);
            		\draw (22) to (26);
            		\draw (22) to (25);
            		\draw (23) to (20);
            		\draw (23) to (27);
            		\draw (23) to (26);
            		\draw (23) to (25);
            		\draw (24) to (20);
            		\draw (24) to (27);
            		\draw (24) to (26);
            		\draw (24) to (25);
            		\draw (36) to (21);
            		\draw (37) to (22);
            		\draw (38) to (23);
            		\draw (39) to (24);
            		\draw (43) to (48);
            		\draw (42) to (49);
            		\draw (41) to (50);
            		\draw (40) to (51);
            		\draw (51) to (24);
            		\draw (50) to (23);
            		\draw (49) to (22);
            		\draw (48) to (21);
            		\draw (47) to (25);
            		\draw (46) to (26);
            		\draw (45) to (27);
            		\draw (44) to (20);
            		\draw (44) to (35);
            		\draw (45) to (34);
            		\draw (46) to (33);
            		\draw (47) to (32);
            		\draw (25) to (31);
            		\draw (26) to (30);
            		\draw (27) to (29);
            		\draw (20) to (28);
                \end{tikzpicture}
                \caption{A bipartite graph not satisfying the inequality for trees.}
                \label{fig:Bipartite_graph_not_satisfying_the_tree_inequality}
            \end{figure}
             In this given example, there is a bipartite graph with $n = 32$ vertices and $a(G)=25, \alpha(G)=16, \mu(G)=16$ and $25 - 16 > \frac{16-1}{2}.$
        \end{proof}        
    
        Given that every tree is a K\"{o}nig--Egerv\'{a}ry graph, Lemma \ref{KE_alpha>=mu} can be applied that for trees $\mu(T) \leq \alpha(T)$. Now, we can express the tight bound for trees solely in terms of the annihilation number $a(T)$ and the independence number $\alpha(T)$ as follows.

        \begin{corollary}\label{corollary_tree}
             If T is a tree, then $a(T) \leq \frac{3}{2}\alpha(T) - \frac{1}{2}$.
        \end{corollary}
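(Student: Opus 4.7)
The plan is to combine two results already established in this excerpt to eliminate $\mu(T)$ from the bound on $a(T) - \alpha(T)$. The preceding Theorem shows that every tree satisfies $a(T) - \alpha(T) \leq \frac{\mu(T)}{2} - \frac{1}{2}$, which expresses the gap in terms of the matching number. To convert this into a bound involving only $\alpha(T)$, I would invoke Lemma \ref{KE_alpha>=mu}, which asserts $\mu(G) \leq \alpha(G)$ for every K\"{o}nig--Egerv\'{a}ry graph, together with the fact recorded in the Introduction that every tree is K\"{o}nig--Egerv\'{a}ry. Applied to $T$, these give $\mu(T) \leq \alpha(T)$.

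Chaining the two inequalities then yields $a(T) - \alpha(T) \leq \frac{\mu(T)}{2} - \frac{1}{2} \leq \frac{\alpha(T)}{2} - \frac{1}{2}$, and adding $\alpha(T)$ to both sides and collecting terms produces the desired $a(T) \leq \frac{3}{2}\alpha(T) - \frac{1}{2}$. There is no real obstacle here, which is precisely why the statement is labeled as a corollary rather than as an independent theorem: it is a purely algebraic consequence of plugging the tree-specific König-Egerváry bound $\mu(T) \leq \alpha(T)$ into the Theorem proved just above.
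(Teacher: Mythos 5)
Your proposal is correct and follows exactly the paper's argument: the paper likewise notes that every tree is a K\"{o}nig--Egerv\'{a}ry graph, applies Lemma \ref{KE_alpha>=mu} to get $\mu(T) \leq \alpha(T)$, and chains this with the tree bound $a(T) - \alpha(T) \leq \frac{\mu(T)}{2} - \frac{1}{2}$ before rearranging. No differences worth noting.
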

        \begin{proof}
        \[a(T) - \alpha(T) \leq \frac{\mu(T)}{2} - \frac{1}{2} \leq  \frac{\alpha(T)}{2} - \frac{1}{2}\]
            \[a(T) \leq \frac{3}{2}\alpha(T) - \frac{1}{2}.\] 
        \end{proof}


    \section{The tight upper bound for bipartite graphs} \label{bipartite_section}
        To establish the tight upper bound for bipartite graphs, we make use of the following well-known lemma.
        \begin{lemma}\label{lemma:edges=n^2/4}
            The maximum number of edges in a bipartite graph $G$ is $\frac{n(G)^2}{4}$.
        \end{lemma}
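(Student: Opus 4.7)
The plan is to bound the edges of a bipartite graph by the product of the sizes of its two parts, and then maximize that product subject to the constraint that the sizes sum to $n(G)$. This is the standard short argument for this folklore fact.

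First I would fix a bipartition $V(G) = X \cup Y$ of the bipartite graph $G$, with $|X| = p$ and $|Y| = q$, so that $p + q = n(G)$. Since every edge has one endpoint in $X$ and one endpoint in $Y$, the number of edges is at most the number of pairs in $X \times Y$, giving $m(G) \leq p \cdot q$ (with equality when $G$ is the complete bipartite graph $K_{p,q}$).

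Next I would maximize $p \cdot q$ over nonnegative integers with $p + q = n(G)$. The inequality of arithmetic and geometric means (applied to $p$ and $q$) gives
\[
pq \leq \left(\frac{p+q}{2}\right)^2 = \frac{n(G)^2}{4},
\]
and combining this with the previous bound yields $m(G) \leq \frac{n(G)^2}{4}$, as required. Equality is attained for $K_{n/2, n/2}$ when $n(G)$ is even.

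There is essentially no obstacle here: the argument is a one-line application of AM-GM after the trivial observation that a bipartite graph embeds in $K_{p,q}$. The only minor subtlety worth mentioning is that when $n(G)$ is odd the bound is not achieved by an integer-sized bipartition, but the inequality $m(G) \leq \frac{n(G)^2}{4}$ still holds (in that case $m(G) \leq \lfloor n(G)/2 \rfloor \lceil n(G)/2 \rceil < \frac{n(G)^2}{4}$).
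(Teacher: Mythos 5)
Your proof is correct and complete: bounding $m(G)$ by $pq$ for a bipartition of sizes $p+q=n(G)$ and then applying AM--GM is the standard argument, and your remark about the odd case is a nice touch. The paper itself offers no proof of this lemma, merely citing it as well known, so your argument simply supplies the folklore justification the authors chose to omit.
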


        \begin{theorem}\label{theorem_bipartite}
            For every bipartite graph $G$, $a(G) - \alpha(G) \leq 2 + \mu(G) - 2\sqrt{1 + \mu(G)}$.
        \end{theorem}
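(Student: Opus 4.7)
The plan is to follow the blueprint established for trees, but replace the tree edge bound $m(T[B]) \leq n(T[B]) - 1$ with the Tur\'an-type bound from Lemma \ref{lemma:edges=n^2/4}. Fix an annihilation decomposition $\langle A, B \rangle$ of $G$. Counting the degrees in $A$ and $B$ two ways and then invoking Lemma \ref{lemma:ma<mb} together with Lemma \ref{lemma:in_bipartite_a-alpha<m(A)} yields, exactly as in the tree case,
\[
2(a(G) - \alpha(G)) + k\langle A,B \rangle \;\leq\; 2m(G[A]) + k\langle A,B \rangle \;\leq\; m(G) \;\leq\; 2m(G[B]) + k\langle A,B \rangle.
\]

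Next, since every subgraph of a bipartite graph is bipartite, $G[B]$ is bipartite, so Lemma \ref{lemma:edges=n^2/4} gives $m(G[B]) \leq n(G[B])^2/4$. Cancelling $k\langle A,B \rangle$ from the chain above leaves
\[
a(G) - \alpha(G) \;\leq\; \frac{n(G[B])^2}{4}.
\]
Because $G$ is bipartite and therefore K\"onig--Egerv\'ary, we have $n(G) = \alpha(G) + \mu(G)$, so
\[
n(G[B]) \;=\; n(G) - a(G) \;=\; \mu(G) - (a(G) - \alpha(G)).
\]

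To finish, I would set $d := a(G) - \alpha(G)$ and rewrite the bound as $4d \leq (\mu(G) - d)^2$, i.e.\ the quadratic inequality
\[
d^2 - (2\mu(G) + 4)\, d + \mu(G)^2 \;\geq\; 0,
\]
whose roots are $\mu(G) + 2 \pm 2\sqrt{\mu(G) + 1}$. The only genuine obstacle is choosing the correct branch: Lemma \ref{a_minum_alpha_leq_mu} guarantees $d \leq \mu(G)$, which is strictly less than the larger root $\mu(G) + 2 + 2\sqrt{\mu(G)+1}$, so $d$ is forced into the lower branch, giving $d \leq 2 + \mu(G) - 2\sqrt{1 + \mu(G)}$ as claimed.
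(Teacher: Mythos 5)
Your proposal is correct and follows essentially the same route as the paper's proof: the same annihilation decomposition and degree-counting chain via Lemmas \ref{lemma:ma<mb} and \ref{lemma:in_bipartite_a-alpha<m(A)}, the bound $m(G[B]) \leq n(G[B])^2/4$ from Lemma \ref{lemma:edges=n^2/4}, the identical quadratic in $d = a(G) - \alpha(G)$, and the same use of $d \leq \mu(G)$ from Lemma \ref{a_minum_alpha_leq_mu} to exclude the upper branch. Your explicit remark that $G[B]$ inherits bipartiteness is a welcome (if minor) clarification the paper leaves implicit.
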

        \begin{proof}

            Consider a bipartite graph $G$ with an annihilation decomposition $\langle A, B \rangle$. 
            \[Deg(A) = \color{blue}2m(G[A])  + k\langle A, B \rangle \color{black} \leq m(G) \leq \color{brown} 2m(G[B]) + k\langle A, B \rangle \color{black} = Deg(B)\text{\;\;(Lemma \ref{lemma:ma<mb})}\]
            \[\color{blue}2(a(G) - \alpha(G)) + k\langle A, B \rangle \leq 2m(G[A]) + k\langle A, B \rangle \color{black}\text{\;\;(Lemma \ref{lemma:in_bipartite_a-alpha<m(A)})}\]
            \[\color{brown}2m(G[B]) + k\langle A, B \rangle \leq 2\left(\frac{(n(G)-a(G))^2}{4} \right) + k\langle A, B \rangle\color{black} \text{\;\;(Lemma \ref{lemma:edges=n^2/4})}\]
            Combining the above inequalities, we obtain
            \[\color{blue}2(a(G) - \alpha(G)) + k\langle A, B \rangle \leq 2m(G[A])  + k\langle A, B \rangle \color{black} \leq m(G) \leq\] \[\leq\color{brown} 2m(G[B]) + k\langle A, B \rangle \leq 2\left(\frac{(n(G)-a(G))^2}{4} \right) + k\langle A, B \rangle. \color{black}\]
            Hence,
            \[\color{blue}2(a(G) - \alpha(G)) + k\langle A, B \rangle \color{black}\leq \color{brown}2\left(\frac{(n(G)-a(G))^2}{4}\right) + k\langle A, B \rangle\color{black}\]
            \[a(G) - \alpha(G) \leq \frac{(n(G)-a(G))^2}{4}\]
            \[4(a(G) - \alpha(G)) \leq (\alpha(G) - a(G) + \mu(G))^2\]
            \[4(a(G) - \alpha(G)) \leq (\alpha(G) - a(G))^2 - 2\mu(G)(a(G) - \alpha(G)) + \mu(G)^2\]
            \[(a(G) - \alpha(G))^2 - (2\mu(G) + 4)(a(G) - \alpha(G)) + \mu(G)^2 \geq 0\]
            The zeros of the equation are:
            \[(a(G) - \alpha(G))_{1,2} = 2 + \mu(G) \pm 2\sqrt{1+\mu(G)}.\]
            In the last  equality, we look for values that are bigger or equal to zero, so we have only two cases,\\
    
            \textbf{Case 1:} $a(G) - \alpha(G) \geq 2 + \mu(G) + 2\sqrt{1+\mu(G)}$\\
            By Lemma \ref{a_minum_alpha_leq_mu}, For K\"{o}nig--Egerv\'{a}ry graphs $a(G)-\alpha(G) \leq \mu(G)$. Hence, this case is impossible.\\
            
            \textbf{Case 2:} $a(G) - \alpha(G) \leq 2 + \mu(G) - 2\sqrt{1+\mu(G)}$\\
            By Lemma \ref{a_minum_alpha_leq_mu}, $a(G) - \alpha(G) \leq \mu(G)$, and easy to see that $2 - 2\sqrt{1+\mu(G)}$ is a negative number. So this case holds for every $\mu(G) > 0$. 
        \end{proof}
        
        \begin{theorem}
            The inequality $a(G) - \alpha(G) \leq 2 + \mu(G) - 2\sqrt{1+\mu(G)}$ is the tight bound for bipartite graphs.
        \end{theorem}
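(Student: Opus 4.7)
The plan is to exhibit, for infinitely many values of $\mu(G)$, an explicit bipartite graph attaining equality in Theorem \ref{theorem_bipartite}. Since $a(G)-\alpha(G)$ is an integer, equality in $a(G)-\alpha(G)=2+\mu(G)-2\sqrt{1+\mu(G)}$ can hold only when $1+\mu(G)$ is a perfect square, so I would parametrize $\mu(G)=\ell^{2}+2\ell$ with $\ell\ge 1$ (equivalently $\mu(G)+1=(\ell+1)^{2}$), in which case the target becomes $a(G)-\alpha(G)=\ell^{2}$.

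For each $\ell\ge 1$, I would construct a bipartite graph $G_\ell$ as follows. Start with a complete bipartite graph $K_{\ell,\ell}$ on $2\ell$ vertices, which plays the role of the complement of the annihilation set in the proof of Theorem \ref{theorem_bipartite}. Attach $\ell$ new pendant leaves to each of the $2\ell$ core vertices, contributing $2\ell^{2}$ degree-one vertices. Finally, add $2\ell^{2}$ further vertices and pair them off into a disjoint perfect matching, contributing another $\ell^{2}$ edges. The choice of $K_{\ell,\ell}$ saturates the bound $m(G[B])\le (n(G)-a(G))^{2}/4$ coming from Lemma \ref{lemma:edges=n^2/4}, the pendants push the $2\ell$ high-degree core vertices outside the annihilation set, and the auxiliary matching calibrates $m(G[A])$ so that Lemma \ref{lemma:in_bipartite_a-alpha<m(A)} is tight.

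The verification is then a direct degree-sequence computation. The degree sequence of $G_\ell$ consists of $4\ell^{2}$ ones followed by $2\ell$ entries equal to $2\ell$, so $m(G_\ell)=4\ell^{2}$, and since the first $4\ell^{2}$ degrees sum exactly to $m(G_\ell)$ while adjoining one more vertex of degree $2\ell$ overshoots, $a(G_\ell)=4\ell^{2}$. Taking all $2\ell^{2}$ pendants together with one endpoint from each matched auxiliary pair yields an independent set of size $3\ell^{2}$; a short trade-off argument (exchanging a core vertex for its $\ell$ pendants loses $\ell-1$ vertices, and trading pendants for any side of $K_{\ell,\ell}$ loses $\ell^{2}-\ell$) shows $\alpha(G_\ell)=3\ell^{2}$. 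Since $G_\ell$ is bipartite and hence K\"{o}nig--Egerv\'{a}ry, $\mu(G_\ell)=n(G_\ell)-\alpha(G_\ell)=\ell^{2}+2\ell$, and so $a(G_\ell)-\alpha(G_\ell)=\ell^{2}=2+\mu(G_\ell)-2\sqrt{1+\mu(G_\ell)}$, as required.

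The main obstacle is finding a construction that simultaneously forces equality in all the intermediate inequalities used in the proof of Theorem \ref{theorem_bipartite}: $G[B]$ must be balanced complete bipartite to saturate Lemma \ref{lemma:edges=n^2/4}, $G[A]$ must contain exactly $a(G)-\alpha(G)$ edges to saturate Lemma \ref{lemma:in_bipartite_a-alpha<m(A)}, and the degree sequence must be arranged so that $a(G)$ lands precisely at the break between the low- and high-degree vertices. The three pieces of the construction (core, pendants, auxiliary matching) are each chosen to control one of these three conditions, after which everything else is a routine calculation.
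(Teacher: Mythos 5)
Your proposal is correct: the graph $G_\ell$ you describe is bipartite, has degree sequence consisting of $4\ell^2$ ones followed by $2\ell$ entries equal to $2\ell$, and a routine check confirms $a(G_\ell)=4\ell^2$, $\alpha(G_\ell)=3\ell^2$, $\mu(G_\ell)=\ell^2+2\ell$, hence $a(G_\ell)-\alpha(G_\ell)=\ell^2=2+\mu(G_\ell)-2\sqrt{1+\mu(G_\ell)}$. The approach is the same as the paper's in spirit --- tightness is witnessed by an explicit extremal graph --- but the paper's proof of this particular theorem exhibits only a single $16$-vertex example ($a=12$, $\alpha=8$, $\mu=8$), deferring the infinite family to a separate subsequent theorem built from sets $A$, $B$ and a core $K_{p-1,p-1}$ with $\mu=p^2-1$ for $p\ge 3$. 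Your construction folds both statements into one, is somewhat cleaner (pendants plus a disjoint auxiliary matching rather than the paper's three-part $A,B,C$ partition with vertices of degrees $1$, $2$ and $p+1$), and covers the additional case $\mu=3$ (your $\ell=1$, a $P_4$ together with a $K_2$), which the paper's family omits since it requires $p\ge3$. Your preliminary observation that equality forces $1+\mu(G)$ to be a perfect square is also a useful remark absent from the paper. The only place where you should be a bit more careful in a final write-up is the claim $\alpha(G_\ell)=3\ell^2$: the clean way to close it is to note that any independent set meeting the core in $x$ vertices (necessarily all on one side of $K_{\ell,\ell}$) has size at most $x+(2\ell-x)\ell+\ell^2=3\ell^2+x(1-\ell)\le 3\ell^2$ for $\ell\ge 1$.
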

        
        \begin{proof}
            
        As an illustration, Figure \ref{fig:example_of_tight_bipartite} shows the case of a bipartite graph with $n(G) = 16$, where $a(G) = 12$, $\alpha(G) = 8$, and $\mu(G) = 8$. Notably, this graph satisfies the equality corresponding to the tight bound: $12 - 8 = 2 + 8 - 2\sqrt{1+8}$ and not satisfies the tight bound for trees: $12 - 8 > \frac{8}{2} - \frac{1}{2}$.
            
            \begin{figure}[H]
                \centering
                
                \begin{tikzpicture}[scale=0.5]
                    \tikzstyle{black}=[fill=black, draw=black, shape=circle]
                    \tikzstyle{red}=[fill=red, draw=black, shape=circle]
                    \tikzstyle{blue}=[fill=blue, draw=black, shape=circle]
                
            		\node [style=red] (0) at (0, 0) {};
            		\node [style=red] (1) at (1, 0) {};
            		\node [style=red] (2) at (2, 0) {};
            		\node [style=red] (3) at (3, 0) {};
            		\node [style=black] (4) at (4, 1) {};
            		\node [style=black] (5) at (5, 1) {};
            		\node [style=black] (6) at (4, 2) {};
            		\node [style=black] (7) at (5, 2) {};
            		\node [style=red] (8) at (6, 0) {};
            		\node [style=red] (9) at (7, 0) {};
            		\node [style=red] (10) at (8, 0) {};
            		\node [style=red] (11) at (9, 0) {};
            		\node [style=blue] (12) at (7, 3) {};
            		\node [style=blue] (13) at (8, 3) {};
            		\node [style=blue] (14) at (2, 3) {};
            		\node [style=blue] (15) at (1, 3) {};
            
            		\draw (6) to (7);
            		\draw (6) to (5);
            		\draw (4) to (7);
            		\draw (4) to (5);
            		\draw (3) to (4);
            		\draw (2) to (6);
            		\draw (1) to (14);
            		\draw (0) to (15);
            		\draw (14) to (6);
            		\draw (15) to (4);
            		\draw (12) to (7);
            		\draw (13) to (5);
            		\draw (13) to (11);
            		\draw (12) to (10);
            		\draw (7) to (9);
            		\draw (5) to (8);
                \end{tikzpicture}

                \caption{A bipartite graph that satisfies the equality.}
                \label{fig:example_of_tight_bipartite}
            \end{figure}
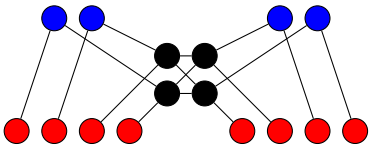

        \end{proof}

        \begin{theorem}
            There exists an infinite family of graphs where the difference between the annihilation number $a(G)$ and the independence number $\alpha(G)$ is equal to $2 + \mu(G) - 2\sqrt{1+\mu(G)}$.
        \end{theorem}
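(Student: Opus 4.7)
The plan is to exhibit, for each integer $q \geq 2$, a bipartite graph $G_q$ with $\mu(G_q) = q(q+2)$ that achieves the bound with equality. Since the matching numbers $q(q+2)$ are distinct for different $q$, this yields an infinite family, and the case $q = 2$ recovers the graph in Figure~\ref{fig:example_of_tight_bipartite}.

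First I would read off what the equality case must look like from the chain of inequalities in the proof of Theorem~\ref{theorem_bipartite}. Tracing the equalities backward forces $G[B]$ to be a balanced complete bipartite graph $K_{q,q}$ (so that Lemma~\ref{lemma:edges=n^2/4} is tight), forces $G[A]$ to consist of a matching of $q^2$ edges together with isolated vertices (so that Lemma~\ref{lemma:in_bipartite_a-alpha<m(A)} is tight with $m(G[A]) = q^2 = a(G) - \alpha(G)$), and forces each vertex isolated in $G[A]$ to be matchable into $B$, giving $\mu(G) = q^2 + 2q$. This already pins down $\sqrt{1+\mu(G)} = q+1$ and the target value $a(G) - \alpha(G) = q^2$.

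Next I would realize such a graph explicitly. Take $K_{q,q}$ as the core $B$; from each core vertex attach one pendant leaf (an ``inner red'' of degree one) and one length-two path (a ``blue'' of degree two carrying its own pendant ``outer red'' leaf); finally add $q(q-2)$ disjoint pendant edges sitting entirely inside $A$. The resulting graph has $n(G_q) = 2q(q+2) = m(G_q)$, degree sequence $1^{2q^2}, 2^{2q}, (q+2)^{2q}$, and is bipartite via a $2$-coloring propagated outward from the bipartition of the core.

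The verification that this $G_q$ works reduces to four direct checks: concatenating the inner-red/core edges, the blue/outer-red edges, and the extra pendant edges gives a perfect matching of size $q(q+2)$, so $\mu(G_q) = q(q+2)$ and $\alpha(G_q) = n(G_q) - \mu(G_q) = q(q+2)$; summing the $2q^2$ degree-one vertices together with the $2q$ degree-two vertices yields exactly $m(G_q)$, whereas including any degree-$(q+2)$ vertex overshoots, so $a(G_q) = 2q(q+1)$ and $a(G_q) - \alpha(G_q) = q^2 = 2 + \mu(G_q) - 2\sqrt{1 + \mu(G_q)}$. The delicate point is calibrating the number $q(q-2)$ of extra pendant edges: they must simultaneously raise $\mu$ to $q(q+2)$ and make the partial degree sum land precisely on $m(G_q)$ after the blues are included. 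That a single count achieves both is the hinge of the construction, and it degenerates to zero when $q = 2$, recovering the given example.
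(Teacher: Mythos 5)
Your construction is correct and is in fact isomorphic to the paper's own family: setting $q = p-1$, your core $K_{q,q}$ is the paper's set $C = K_{p-1,p-1}$, your pendant leaves and length-two paths realize the one-to-one connections of $C$ to $A$ and to the degree-two part of $B$, and your $q(q-2) = (p-1)(p-3)$ extra pendant edges are exactly the degree-one $B$ vertices matched into $A$, yielding the same degree sequence $1^{2q^2},2^{2q},(q+2)^{2q}$ and the same values $\mu = \alpha = q(q+2)$, $a = 2q(q+1)$, $a-\alpha = q^2$. The only addition beyond the paper is your explicit derivation of the construction from the equality cases of the proof of Theorem \ref{theorem_bipartite}, which the paper leaves implicit.
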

        \begin{proof}
        Let $p\geq 3$ be an  integer. By observing Figure \ref{fig:example_of_tight_bipartite}, it becomes apparent that the vertex set can be divided into three distinct sets:
        \begin{itemize}
            \item A maximum independent set, denoted as $A$, which consists of $p^2-1$ vertices. In this set, we have $(p-1)^2$ vertices connected to $B$ in a one-to-one manner and $2p-2$ vertices connected to $C$ in a one-to-one manner. All the vertices in set $A$ have a degree of 1, meaning that each vertex is connected to exactly one other vertex in the graph.
            \item Similarly, independent set denoted as $B$, which contains $(p-1)^2$ vertices. In this set, all the vertices in $B$ are connected to vertices in $A$ in a one-to-one manner. Additionally, $2p-2$ vertices from set $B$ are also connected to all the vertices in $C$ in a one-to-one manner. As a result, there are $p^2-4p+3$ vertices in $B$ with a degree of 1. Moreover, there are $2p-2$ vertices in $B$ with a degree of 2, as they are connected to both vertices in $A$ and vertices in $C$.
            \item Set $C$, which is a complete bipartite graph $K_{p-1,p-1}$ consisting of $2p-2$ vertices. In this set, every vertex in $C$ is connected to one vertex in $A$ and one vertex in $B$, forming one-to-one connections. Consequently, all vertices in set $C$ have a degree equal to $p+1$.
        \end{itemize}
        
        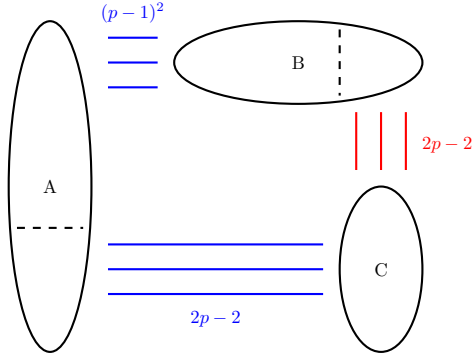
\begin{figure}[H]
            \centering

            \begin{tikzpicture}[thick,scale=1.1, every node/.style={scale=0.7}]
                \draw[] (0,0) ellipse (0.5cm and 2cm);  
                \draw[] (3,1.5) ellipse (1.5cm and 0.5cm);
                \draw[] (4,-1) ellipse (0.5cm and 1cm);
                \node at (0,0) {A};
                \draw[thick] (-0.4,-0.5) -- (0.4,-0.5)[dashed] ;
                \node at (3,1.5) {B};
                \draw[thick] (3.5,1.9) -- (3.5,1.1)[dashed] ;
                \node at (4,-1) {C};
                \draw[thick,blue] (0.7,-0.7) -- (3.3,-0.7) ;
                \draw[thick,blue] (0.7,-1) -- (3.3,-1) ;
                \draw[thick,blue] (0.7,-1.3) -- (3.3,-1.3) ;
                \node[blue] at (2,-1.6) {$2p-2$};
                
                \draw[thick,blue] (0.7,1.8) -- (1.3,1.8) ;
                \draw[thick,blue] (0.7,1.5) -- (1.3,1.5) ;
                \draw[thick,blue] (0.7,1.2) -- (1.3,1.2) ;
                \node[blue] at (1,2.1) {$(p-1)^2$};
                
                \draw[thick,red] (3.7,0.9) -- (3.7,0.2) ;
                \draw[thick,red] (4,0.9) -- (4,0.2) ;
                \draw[thick,red] (4.3,0.9) -- (4.3,0.2) ;
                \node[red] at (4.8,0.5) {$2p-2$};
            \end{tikzpicture}
        
            \caption{The partition of bipartite graphs where the tight bound is achieved.}
            \label{fig:decomposition_of_graph_family}
                        
        \end{figure}

        The degree sequence is $\underbrace{1,\cdots,1}_\text{$p^2-1$},
        \underbrace{1,\cdots,1}_\text{$p^2-4p+3$},
        \underbrace{2,\cdots,2}_\text{$2p-2$},
        \underbrace{p+1,\cdots,p+1}_\text{$2p-2$}.$\\

        To ensure that we avoid negative values for $p^2-4p+3$, it is suggested that $p \geq 3$.\\

        In this partition, we observe that $\alpha(G) = p^2-1 = |A|$ and $\mu(G) = p^2-1 = |A|$. Now, $m(G)$ can be calculated as follows:
        
        \begin{center}
            \[m(G)=\frac{1\cdot(p^2-1) + 1\cdot(p^2-4p+3) +2\cdot (2p-2)+(2p-2)\cdot (p+1)}{2}\]
            \[=2p^2-2.\]      
        \end{center}
  
        Let us calculate $a(G)$. We consider all the vertices in sets $A$ and $B$. These vertices have degrees of either $1$ or $2$.
        \begin{center}
             $\underbrace{1,\cdots,1, 1,\cdots,1, 2,\cdots,2},
            p+1,\cdots,p+1.$            
        \end{center}
        Now,
         \[1\cdot(p^2-1) + 1\cdot(p^2-4p+3) +2\cdot (2p-2)=2p^2-2=m(G).\]
        
        Hence, the maximum value that $a(G)$ can be is given by, \[a(G)= (p^2-1)+(p^2-4p+3)+(2p-2)=2p^2-2p.\]
        And, the difference between $a(G)$ and $\alpha(G)$ is, \[a(G)-\alpha(G) = 2p^2-2p - (p^2-1) = p^2-2p+1=(p-1)^2.\]

        Another way to calculate this difference is by setting $\mu(G)$ to be $p^2-1$ on the right side of the bipartite inequality bound. We have \[2+\mu(G)-2\sqrt{1+\mu(G)} = 2+(p^2-1)-2\sqrt{1+(p^2-1)} = p^2-2p+1=(p-1)^2.\]
        As we can see, we obtained the same expression for the difference between $a(G)$ and $\alpha(G)$.\\
        \end{proof}

        \begin{observation}
            If $p = 3$, then the graph $G$ is connected. On the other hand, if $p \geq 4$, then the graph $G$ is disconnected.
        \end{observation}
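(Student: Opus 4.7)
The plan is to dissect the construction from the previous theorem into its connected pieces. The key observation is that $C$ induces a connected subgraph (being a complete bipartite graph $K_{p-1,p-1}$) and serves as a ``hub'' to which most of the structure is attached via the $A$--$C$ and $B$--$C$ matchings. First I would argue that every vertex of $A$ lying in the $A$--$C$ matching and every vertex of $B$ lying in the $B$--$C$ matching belongs to the same component as $C$. Because the $A$--$B$ matching is one-to-one onto all of $B$, the partners in $A$ of those $2p-2$ degree-two vertices of $B$ are automatically swept into this component as well.

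Next I would count the remaining vertices of $B$: exactly $(p-1)^2 - (2p-2) = p^2 - 4p + 3 = (p-1)(p-3)$ vertices of $B$ have degree $1$, each matched only to a unique vertex of $A$ that itself has no other neighbors (every vertex of $A$ is of degree $1$). Such a pair forms an isolated copy of $K_2$, and the number of these isolated edges is precisely $(p-1)(p-3)$.

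Finally, I would read off the two cases from this formula. For $p=3$, one has $(p-1)(p-3)=0$, so no isolated $K_2$ appears; moreover $(p-1)^2 = 2p-2 = 4$, so the $A$--$B$ and $B$--$C$ matchings cover the same vertices of $B$, and every vertex of $A$ reaches $C$ either through its $A$--$C$ matching edge or through a vertex of $B$ of degree two. Hence $G$ is connected. For $p \geq 4$, the factor $(p-1)(p-3) \geq 3 > 0$ guarantees at least one isolated $K_2$ that is detached from the $C$-component, so $G$ is disconnected. I do not foresee a serious obstacle here; the only delicate point is keeping the bookkeeping of the three matchings straight so that the ``leftover'' vertices of $B$ are correctly identified as forming isolated edges with their $A$-matched partners.
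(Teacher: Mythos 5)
Your proof is correct. The paper states this observation without any proof at all, so there is nothing to compare against; your argument supplies exactly the justification that is missing. The crux is your count of the $(p-1)^2-(2p-2)=(p-1)(p-3)$ degree-one vertices of $B$, each of which pairs with its degree-one partner in $A$ to form an isolated $K_2$: this quantity vanishes precisely when $p=3$ and is positive for $p\geq 4$, while everything else (the $A$--$C$ matching, the $B$--$C$ matching, and the $A$-partners of the degree-two vertices of $B$) is absorbed into the single component containing the connected hub $C\cong K_{p-1,p-1}$. Your bookkeeping of the three matchings is consistent with the construction in the preceding theorem, so the case split is fully justified.
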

    
        \begin{lemma}
           There exist a K\"{o}nig--Egerv\'{a}ry graph that do not satisfy the inequality $a(G) - \alpha(G) \leq 2 + \mu(G) - 2\sqrt{1+\mu(G)}$ .
        \end{lemma}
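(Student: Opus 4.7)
The approach is to exhibit a single König--Egerváry graph that violates the inequality. The only place the bipartite assumption enters the proof of Theorem \ref{theorem_bipartite} is Lemma \ref{lemma:edges=n^2/4}, which caps $m(G[B]) \le n(G[B])^2/4$. In a general König--Egerváry graph, $G[B]$ is allowed to be as dense as a clique, so the natural strategy is to engineer an example where the high-degree set $B$ in the annihilation decomposition induces a complete subgraph, making the same chain of inequalities give a much weaker bound.

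Concretely, I would take $G$ to be the disjoint union of a ``net'' (a triangle on $\{x,y,z\}$ with a pendant $a_i$ attached to each of $x,y,z$) together with three independent edges $a_4a_5$, $a_6a_7$, $a_8a_9$. This yields $n(G)=12$, $m(G)=9$, and a degree sequence consisting of nine $1$'s followed by three $3$'s. The verification is routine: $\{a_1, a_2, a_3, a_4, a_6, a_8\}$ is an independent set of size $6$ and $\{xa_1, ya_2, za_3, a_4a_5, a_6a_7, a_8a_9\}$ is a perfect matching, so $\alpha(G)=\mu(G)=6$ and $\alpha(G)+\mu(G)=n(G)$, confirming that $G$ is a (non-bipartite) König--Egerváry graph. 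The sum of the smallest nine degrees equals $m(G)$, while including the next degree pushes the cumulative sum to $12>m(G)$, so $a(G)=9$. Therefore $a(G)-\alpha(G)=3$ while $2+\mu(G)-2\sqrt{1+\mu(G)} = 8-2\sqrt{7} \approx 2.71$, and the inequality fails.

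The only non-mechanical step is locating this configuration. For an annihilation decomposition with $|B|=t$, the König--Egerváry identity gives $a(G)-\alpha(G) = \mu(G)-t$, and a short calculation shows the bipartite bound is violated exactly when $\mu(G) > t + t^2/4$. The pair $(t,\mu)=(3,6)$ is the smallest integer pair meeting this condition that still admits a valid graph, which directly suggests placing a $K_3$ on $B$ and supplementing with six matching edges---yielding the graph above. Once the right parameters are identified, no step is a real obstacle; the main point is simply recognizing where the bipartite-specific ingredient fails and choosing $B$ to saturate the corresponding slack.
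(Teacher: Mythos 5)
Your proposal is correct and takes essentially the same approach as the paper: both exhibit an explicit $12$-vertex K\"{o}nig--Egerv\'{a}ry counterexample with $\alpha(G)=\mu(G)=6$ and $a(G)=9$, so that $a(G)-\alpha(G)=3>8-2\sqrt{7}$. Your witness (a net plus three disjoint edges, hence disconnected, which the lemma permits) differs from the paper's connected example with degree sequence $1,2,2,2,3,3,3,3,3,7,7,8$, but the verification is routine and correct in both cases.
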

        \begin{proof}
            
        In the graph shown in Figure \ref{fig:KE_not_bipartite}, there are 12 vertices. The blue vertices represent the independent set with an independence number of 6. The red vertices represent the matching with a matching number also equal to 6. Therefore, $G$ is a K\"{o}nig--Egerv\'{a}ry graph. The degree sequence of this graph is:
        
            \[1, 2, 2, 2, 3, 3, 3, 3, 3, 7, 7, 8\]
            and the annihilation number is $9$. In this example, we have $a(G) - \alpha(G) = 9 - 6 = 3$, which is greater than the value obtained from the inequality $2 + \mu(G) - 2\sqrt{1 + \mu(G)} = 2+6-2\sqrt{1+6} = 2.7 .$
            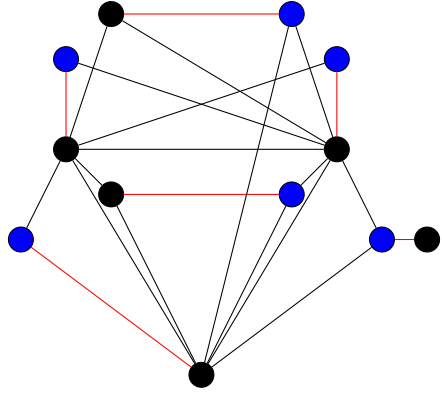
\begin{figure}[H]
                \centering
                \begin{tikzpicture}[scale=0.6]
                \tikzstyle{black}=[fill=black, draw=black, shape=circle]
                \tikzstyle{blue}=[fill=blue, draw=black, shape=circle]
            		\node [style=black] (0) at (0, 0) {};
            		\node [style=blue] (1) at (-4, 3) {};
            		\node [style=blue] (2) at (4, 3) {};
            		\node [style=black] (3) at (5, 3) {};
            		\node [style=black] (4) at (-2, 4) {};
            		\node [style=blue] (5) at (2, 4) {};
            		\node [style=black] (6) at (-3, 5) {};
            		\node [style=black] (7) at (3, 5) {};
            		\node [style=blue] (8) at (-3, 7) {};
            		\node [style=blue] (9) at (3, 7) {};
            		\node [style=black] (10) at (-2, 8) {};
            		\node [style=blue] (11) at (2, 8) {};
            		\draw [red] (0) to (1);
            		\draw (0) to (6);
            		\draw (0) to (4);
            		\draw (0) to (5);
            		\draw (0) to (7);
            		\draw (0) to (2);
            		\draw [red](2) to (3);
            		\draw (2) to (7);
            		\draw (7) to (5);
            		\draw (4) to (6);
            		\draw (6) to (1);
            		\draw [red](4) to (5);
            		\draw (6) to (7);
            		\draw [red](7) to (9);
            		\draw (9) to (6);
            		\draw [red](6) to (8);
            		\draw (8) to (7);
            		\draw [red](10) to (11);
            		\draw (11) to (7);
            		\draw (10) to (6);
            		\draw (11) to (0);
            		\draw (10) to (7);
                \end{tikzpicture}

                \caption{A K\"{o}nig--Egerv\'{a}ry graph not satisfying the bipartite tight bound.}
                \label{fig:KE_not_bipartite}
            \end{figure}
        
        \end{proof}
        By utilizing Lemma \ref{KE_alpha>=mu}, we can rephrase the bipartite inequality to solely involve $a$ and $\alpha$ as follows.
        \begin{corollary}\label{corollary_bipartite}
            If $G$ is a bipartite graph, then $a(G) \leq 2 + 2\alpha(G) - 2\sqrt{1 + \alpha(G)}.$
        \end{corollary}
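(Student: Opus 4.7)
The plan is to combine Theorem~\ref{theorem_bipartite} with the inequality $\mu(G) \leq \alpha(G)$ from Lemma~\ref{KE_alpha>=mu} (applicable since every bipartite graph is K\"{o}nig--Egerv\'{a}ry), so the task reduces to replacing $\mu(G)$ by $\alpha(G)$ on the right-hand side of the bipartite bound. Theorem~\ref{theorem_bipartite} gives
\[
a(G) \leq \alpha(G) + 2 + \mu(G) - 2\sqrt{1+\mu(G)},
\]
so it suffices to show that
\[
\alpha(G) + \mu(G) - 2\sqrt{1+\mu(G)} \leq 2\alpha(G) - 2\sqrt{1+\alpha(G)},
\]
i.e. that $\mu(G) - 2\sqrt{1+\mu(G)} \leq \alpha(G) - 2\sqrt{1+\alpha(G)}$.

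First I would introduce the auxiliary function $f(x) = x - 2\sqrt{1+x}$ and check monotonicity: differentiating gives $f'(x) = 1 - 1/\sqrt{1+x}$, which is nonnegative precisely when $x \geq 0$. Hence $f$ is nondecreasing on $[0,\infty)$. Since $0 \leq \mu(G) \leq \alpha(G)$ for any bipartite graph, evaluating $f$ at these two points yields $f(\mu(G)) \leq f(\alpha(G))$, which is exactly the inequality needed.

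Finally I would assemble the chain: starting from the conclusion of Theorem~\ref{theorem_bipartite}, add $\alpha(G)$ to both sides, then upper-bound $2 + \mu(G) - 2\sqrt{1+\mu(G)}$ by $2 + \alpha(G) - 2\sqrt{1+\alpha(G)}$ using the monotonicity just established, to arrive at $a(G) \leq 2 + 2\alpha(G) - 2\sqrt{1+\alpha(G)}$. There is no real obstacle here; the only point that requires any care is verifying the sign of $f'$, which one must check holds throughout the relevant range $[0,\infty)$ rather than only at a single point, but this is immediate from the explicit formula.
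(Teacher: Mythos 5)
Your proposal is correct and follows essentially the same route as the paper: apply Theorem~\ref{theorem_bipartite} and then replace $\mu(G)$ by $\alpha(G)$ via Lemma~\ref{KE_alpha>=mu}. The only difference is that you explicitly verify the monotonicity of $x \mapsto x - 2\sqrt{1+x}$ on $[0,\infty)$, a step the paper leaves implicit; this is a welcome addition rather than a deviation.
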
 
        \begin{proof}
             \[a(G) - \alpha(G) \leq 2 + \mu(G) - 2\sqrt{1 + \mu(G)} \leq 2 + \alpha(G) - 2\sqrt{1 + \alpha(G)}\]  
             \[a(G) \leq 2 + 2\alpha(G) - 2\sqrt{1 + \alpha(G)}.\] 
        \end{proof}


    \section{The tight upper bound for K\"{o}nig--Egerv\'{a}ry graphs}\label{KE_section}
    To establish the tight upper bound for K\"{o}nig--Egerv\'{a}ry graphs, we present the following lemmas.
        \begin{lemma}
            If a graph $G$ is connected, the inequality $a(G) \leq n(G)-1$ holds only for star graphs.
        \end{lemma}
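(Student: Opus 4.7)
The statement should be read as asserting that equality $a(G)=n(G)-1$ characterizes stars among connected graphs (since $a(G)\le n(G)-1$ trivially holds for any graph with at least one edge, and a connected graph on $n(G)\ge 2$ vertices has edges). Under this reading, the plan is to translate the annihilation hypothesis into a degree condition and then use connectedness to force the star structure.

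First, I would unpack what $a(G)=n(G)-1$ means via the definition. Writing the degree sequence as $d_1\le d_2\le\cdots\le d_n$, the condition says
\[
\sum_{i=1}^{n-1} d_i \le m(G).
\]
Since the full degree sum equals $2m(G)$, subtracting yields $d_n\ge m(G)$. In other words, the maximum degree $\Delta(G)$ must be at least the number of edges.

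Next, I would combine this with the fact that a connected graph on $n(G)$ vertices satisfies $m(G)\ge n(G)-1$, while any simple graph has $\Delta(G)\le n(G)-1$. Chaining these gives
\[
n(G)-1 \le m(G) \le \Delta(G) \le n(G)-1,
\]
forcing $m(G)=n(G)-1$ and $\Delta(G)=n(G)-1$. Thus $G$ is a connected graph with $n(G)-1$ edges, i.e.\ a tree, and it contains a vertex $v$ adjacent to all other vertices. A tree with a universal vertex must be the star $K_{1,n(G)-1}$, since any additional edge would create a cycle.

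I do not anticipate a real obstacle here; the only subtle point is writing the hypothesis in the contrapositive-friendly form $d_n\ge m(G)$, after which the squeeze between $\Delta(G)$ and $m(G)$ via connectivity closes the argument immediately. The converse direction (that stars do satisfy $a(K_{1,n-1})=n-1$) is a one-line verification using the degree sequence $(1,1,\ldots,1,n-1)$, which I would include to make the characterization an ``if and only if''.
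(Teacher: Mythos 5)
Your proposal is correct and follows essentially the same route as the paper: both arguments translate $a(G)=n(G)-1$ into the degree condition $d_n\geq m(G)$ and then use $m(G)\geq n(G)-1$ (connectedness) together with $\Delta(G)\leq n(G)-1$ to force a tree with a universal vertex, i.e.\ a star. Your write-up is in fact more careful than the paper's, which states the squeeze only implicitly; the explicit chain $n(G)-1\leq m(G)\leq\Delta(G)\leq n(G)-1$ and the converse verification are welcome additions.
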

        \begin{proof}
        Consider a graph $G$ with an annihilation number $a(G)$ equal to $n(G)-1$. The sum of the degrees of the first $n(G)-1$ vertices is equal to the degree of the last vertex in the degree sequence. However, the maximum degree of the last vertex can only be $n(G)-1$, which is the case for star graphs.
        \end{proof}
        \begin{corollary}\label{corollary_KE_a<n-1}
            For every connected graph that is not a star graph, the annihilation number $a(G)$ is bounded by $n(G)-2$.
        \end{corollary}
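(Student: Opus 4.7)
The plan is to derive this corollary directly by excluding the only two ways that $a(G)$ could exceed $n(G)-2$, namely $a(G) = n(G)$ and $a(G) = n(G)-1$. Since the annihilation number is, by definition, an index in the degree sequence, it is automatically at most $n(G)$, so these are the only cases that need to be ruled out.

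First I would observe that $a(G) = n(G)$ forces $m(G) = 0$, because in that case the sum of all degrees (which equals $2m(G)$) must be at most $m(G)$. For a connected graph on at least two vertices this is impossible, so this case is eliminated immediately and we already have $a(G) \leq n(G)-1$ for any nontrivial connected graph. This is the same elementary observation used in the proof of the second lemma of the introduction.

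Next, I would invoke the preceding lemma directly: among connected graphs, $a(G) = n(G)-1$ occurs \emph{only} for stars. Since by hypothesis $G$ is connected but not a star, the value $a(G) = n(G)-1$ is excluded as well, leaving $a(G) \leq n(G)-2$.

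There is essentially no obstacle here, and the argument is just a case split of length two. The only genuine content lies in the preceding lemma itself, which characterizes the extremal graphs achieving $a(G) = n(G)-1$; the corollary merely repackages that characterization as an upper bound valid for the complementary class of connected non-stars.
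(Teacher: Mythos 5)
Your argument is correct and matches the paper's intent exactly: the corollary is stated without its own proof as an immediate consequence of the preceding lemma, and your two-case elimination (ruling out $a(G)=n(G)$ via $m(G)>0$ for a connected graph on at least two vertices, then ruling out $a(G)=n(G)-1$ via the star characterization) is precisely the implicit derivation. The only marginal remark is that the trivial one-vertex graph must be regarded as a (degenerate) star or excluded, which you already flag by restricting to at least two vertices.
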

        
        \begin{theorem}\label{KE_tight_bound}
            For every K\"{o}nig--Egerv\'{a}ry graph, the inequality $a(G) - \alpha(G) \leq \mu(G) - 2$ represents the tight bound. 
        \end{theorem}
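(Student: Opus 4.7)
The plan is to derive the upper bound $a(G)-\alpha(G) \leq \mu(G)-2$ directly from Corollary~\ref{corollary_KE_a<n-1}. The core argument is a one-line deduction: if $G$ is connected and not a star, Corollary~\ref{corollary_KE_a<n-1} gives $a(G) \leq n(G)-2$, and substituting the K\"onig--Egerv\'ary identity $n(G) = \alpha(G) + \mu(G)$ immediately yields $a(G) - \alpha(G) \leq \mu(G) - 2$. So the first step is to record this deduction for the connected non-star case, which is where all the content really lies.

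The main obstacle is handling the scope of the claim. Star graphs $K_{1,n-1}$ are K\"onig--Egerv\'ary but satisfy $a - \alpha = 0 > -1 = \mu - 2$, so they violate the inequality as stated; the theorem must therefore be read under an implicit non-triviality assumption such as $G$ being connected and non-star, equivalently excluding the case $\mu(G)=1$ on each component. For disconnected K\"onig--Egerv\'ary graphs, $n$, $\alpha$, and $\mu$ are additive across components while $a$ is not, so I would reduce to the connected case by identifying a single connected non-star K\"onig--Egerv\'ary component from which the ``$-2$'' savings can be extracted and bounding $a$ on the remaining components by their vertex counts. Making this reduction rigorous, and cleanly ruling out pathological configurations such as disjoint unions of stars, is where I expect the most care to be needed.

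For the tightness half of the statement, I would exhibit a family of K\"onig--Egerv\'ary graphs realizing equality $a(G)-\alpha(G) = \mu(G)-2$ for arbitrarily large $\mu$. The natural construction is one in which the two vertices of highest degree absorb all of $m(G)$ in the degree-sum computation, forcing $a(G) = n(G)-2$; attaching pendant edges to a carefully chosen hub-and-matching core should let me tune $\mu$ freely while preserving the K\"onig--Egerv\'ary property and the identity $\alpha(G)+\mu(G)=n(G)$. A small witness such as $P_4$, where $a=\alpha=2$ and $\mu=2$, already illustrates the phenomenon at the base, and scaling up via pendant attachments should be a routine verification once the extremal degree profile is pinned down.
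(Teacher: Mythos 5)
Your derivation of the upper bound is exactly the paper's (implicit) route: Corollary~\ref{corollary_KE_a<n-1} gives $a(G)\leq n(G)-2$ for connected non-star graphs, and the K\"{o}nig--Egerv\'{a}ry identity $n(G)=\alpha(G)+\mu(G)$ turns this into $a(G)-\alpha(G)\leq\mu(G)-2$; the paper never writes this one-line deduction out, so making it explicit is an improvement rather than a deviation. Your observation that stars violate the inequality as stated ($a-\alpha=0>-1=\mu-2$ for $K_{1,n-1}$) is correct and flags a genuine scope defect that the paper silently ignores; the same failure occurs for edgeless graphs and for a star together with isolated vertices. Your worry about disconnected graphs can be closed more cheaply than you suggest: $a(G)=n(G)-1$ forces $2m(G)-d_n\leq m(G)$, i.e.\ $d_n\geq m(G)$, so one vertex meets every edge and $G$ is a star plus isolated vertices; hence $a(G)\leq n(G)-2$ for every other graph with $m(G)>0$, connected or not, and no component-by-component reduction is needed (excluding every component with $\mu=1$, as you propose, is stronger than necessary --- a disjoint union of two stars actually attains equality). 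Where you genuinely diverge is the tightness half: the paper exhibits a non-bipartite family (a triangle whose vertices carry two pendant independent sets of size $k$ and one extra pendant vertex, giving $\mu=3$ and $a-\alpha=1=\mu-2$ for every $k$), whereas you offer $P_4$, which is a valid and simpler witness ($a=\alpha=\mu=2$), plus an unspecified pendant-attachment scaling for larger $\mu$. A single witness is all the paper's own tightness arguments demand elsewhere, so your $P_4$ suffices for the stated claim; if you want equality for arbitrarily large $\mu$ you still owe the explicit construction, but note the paper itself only realizes $\mu=3$ here, so on that point your sketch is no less complete than the published proof.
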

        \begin{proof}
        Consider a K\"{o}nig--Egerv\'{a}ry graph $G$ that is not a bipartite graph, with the following structure: Start with a cycle $C_3$ containing vertices $v_1$, $v_2$, and $v_3$. Vertex $v_1$ is connected to an independent set of size $k$, vertex $v_2$ is connected to another independent set of size $k$, and vertex $v_3$ is connected to a single additional vertex. This graph has a total of $2k+4$ vertices, $\alpha(G) = 2k+1$, and $\mu(G) = 3$.
        
        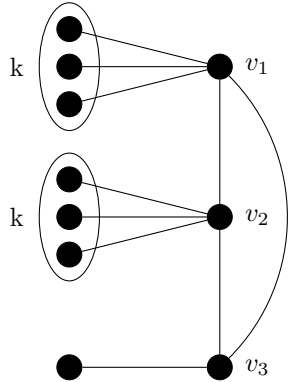
\begin{figure}[H]
            \centering
            \begin{tikzpicture}[scale=0.5]
                \tikzstyle{black}=[fill=black, draw=black, shape=circle]

        		\node [style=black] (0) at (0, -1) {};
        		\node [style=black] (1) at (0, 2) {};
        		\node [style=black] (2) at (0, 3) {};
        		\node [style=black] (3) at (0, 4) {};
        		\node [style=black] (4) at (0, 6) {};
        		\node [style=black] (5) at (0, 7) {};
        		\node [style=black] (6) at (0, 8) {};
        		\node [style=black] (7) at (4, 7) {};
        		\node [style=black] (8) at (4, 3) {};
        		\node [style=black] (9) at (4, -1) {};
        		\draw (9) to (8);
        		\draw (8) to (7);
        		\draw [bend left=45, looseness=1] (7) to (9);
        		\draw (0) to (9);
        		\draw (1) to (8);
        		\draw (2) to (8);
        		\draw (3) to (8);
        		\draw (4) to (7);
        		\draw (5) to (7);
        		\draw (6) to (7);
        		
        		\draw[] (0,3) ellipse (0.8cm and 1.7cm);
        		\draw[] (0,7) ellipse (0.8cm and 1.7cm);
        		
                \node at (-1.4,3) {k};
                \node at (-1.4,7) {k};
                \node at (5, 7) {$v_1$};
                \node at (5, 3) {$v_2$};
                \node at (5, -1) {$v_3$};
            \end{tikzpicture}
        
            \caption{Family of infinity graphs that satisfy the tight bound with $\mu=3$}
            \label{fig:KE_general_example}
        \end{figure}
        In this structure, the degree sequence is,
        \begin{center}
            $\underbrace{1,\cdots,1}_\text{$2k+1$},
            \underbrace{3}_\text{$1$},
            \underbrace{k+2}_\text{$1$},
            \underbrace{k+2}_\text{$1$}$\\
        \end{center}
        
        The number of edges in each graph is equal to $2k+4$. The annihilation number $a(G)$ includes all the vertices with a degree of 1 or 3, so $a(G) = 2k+2$. Hence, we have $a(G) = n(G) - 2$, which can be expressed differently as $a(G) - \alpha(G) = \mu(G) - 2$.\\

        \end{proof}
        
        For example, Figure \ref{fig:example_KE_graph_a=n_minus2} illustrates the configuration from Figure \ref{fig:KE_general_example} with $k=2$.
        
        \begin{figure}[H]
            \centering
            \begin{tikzpicture}[scale=0.7]
        	\tikzstyle{black}=[fill=black, draw=black, shape=circle]
        		\node [style=black] (0) at (0, -0.5) {};
        		\node [style=black] (1) at (0, 1) {};
        		\node [style=black] (2) at (0, 2) {};
        		\node [style=black] (3) at (0, 3) {};
        		\node [style=black] (4) at (0, 4) {};
        		\node [style=black] (5) at (2, 3.5) {};
        		\node [style=black] (6) at (2, 1.5) {};
        		\node [style=black] (7) at (2, -0.5) {};
        		\draw (0) to (7);
        		\draw (1) to (6);
        		\draw (2) to (6);
        		\draw (3) to (5);
        		\draw (4) to (5);
        		\draw (5) to (6);
        		\draw (6) to (7);
        		\draw [in=-45, out=45, looseness=1] (7) to (5);
            \end{tikzpicture}
            \caption{Example of a K\"{o}nig--Egerv\'{a}ry graph with $n=8$ and $a=6$.}
            \label{fig:example_KE_graph_a=n_minus2}
        \end{figure}
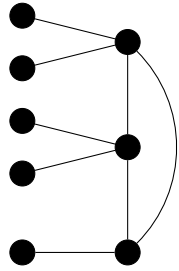

        Based on Theorem \ref{KE_tight_bound} and Lemma \ref{KE_alpha>=mu}, we can rephrase the König-Egerváry inequality to involve only $a(G)$ and $\alpha(G)$ as follows:
        \begin{corollary}\label{corollary_KE}
            For K\"{o}nig--Egerv\'{a}ry graphs that are not bipartite, we have $a(G) \leq 2\alpha(G) - 2$.
        \end{corollary}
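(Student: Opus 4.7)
The plan is to chain together the two results the authors have already established for K\"{o}nig--Egerv\'{a}ry graphs. Theorem \ref{KE_tight_bound} provides the bound $a(G) - \alpha(G) \leq \mu(G) - 2$, while Lemma \ref{KE_alpha>=mu} gives $\mu(G) \leq \alpha(G)$ for any K\"{o}nig--Egerv\'{a}ry graph. Substituting the second into the first immediately yields $a(G) - \alpha(G) \leq \alpha(G) - 2$, and rearranging produces $a(G) \leq 2\alpha(G) - 2$.

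Concretely, I would write a one-line display aligning the two inequalities, e.g.\ showing
\[
a(G) - \alpha(G) \leq \mu(G) - 2 \leq \alpha(G) - 2,
\]
and then conclude by adding $\alpha(G)$ to both sides. This mirrors exactly the style the authors used in Corollary \ref{corollary_tree} (the tree case) and Corollary \ref{corollary_bipartite} (the bipartite case), so the presentation should simply follow that template for consistency.

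There is no real obstacle here: both ingredients are in hand and the argument is purely arithmetic. The only subtle point worth flagging is why the statement restricts to non-bipartite K\"{o}nig--Egerv\'{a}ry graphs even though the chain of inequalities is valid for every K\"{o}nig--Egerv\'{a}ry graph. The restriction is a matter of tightness rather than validity: for bipartite graphs the stronger Corollary \ref{corollary_bipartite} already applies, so Corollary \ref{corollary_KE} is only the \emph{best possible} bound precisely outside the bipartite class, as witnessed by the family constructed in Theorem \ref{KE_tight_bound}. I would not belabor this in the proof itself, but a brief remark immediately before or after the corollary could be appropriate if the authors want to motivate the hypothesis.
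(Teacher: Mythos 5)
Your proposal is correct and follows exactly the same route as the paper's own proof, which simply chains $a(G) - \alpha(G) \leq \mu(G) - 2$ (Theorem \ref{KE_tight_bound}) with $\mu(G) \leq \alpha(G)$ (Lemma \ref{KE_alpha>=mu}) and rearranges. Your additional remark about the non-bipartite hypothesis being a matter of tightness rather than validity is a reasonable observation, but it is not part of the paper's argument and is not needed for the proof.
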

        
        \begin{proof}
        \[a(G) - \alpha(G) \leq \mu(G) - 2 \leq \alpha(G) - 2,\] 
        \[a(G) \leq 2\alpha(G) - 2.\]
        \end{proof}


    \section{Conclusions}
        This paper investigates three tight bounds on the difference between the annihilation number $a(G)$ and the independence number $\alpha(G)$. These bounds are as follows:
        \begin{itemize}
            \item For every tree $T$, we have $a(T) - \alpha(T) \leq \frac{\mu(T)}{2} - \frac{1}{2}$.
            \item For every bipartite graph $G$, we have $a(G) - \alpha(G) \leq 2 + \mu(G) - 2\sqrt{1 + \mu(G)}$.
            \item For every K\"{o}nig--Egerv\'{a}ry graph $G$, we have $a(G) - \alpha(G) \leq \mu(G) - 2$.
        \end{itemize}
        Those three inequalities can be rewritten by Corollaries \ref{corollary_tree},  \ref{corollary_bipartite} and \ref{corollary_KE} by the following.
        \begin{itemize}
            \item For every tree $T$, $a(T) \leq \frac{3}{2}\alpha(T) - \frac{1}{2}$.
            \item For every bipartite graph $G$, $a(G) \leq 2 + 2\alpha(G) - 2\sqrt{1 + \alpha(G)}$.
            \item For every K\"{o}nig--Egerv\'{a}ry graph $G$, $a(G) \leq 2\alpha(G) - 2$.
        \end{itemize}
        \begin{problem}
            Find the tight bound on the difference between $a(G)$ and $\alpha(G)$ for forests.
        \end{problem}

        \begin{conjecture}
            There exists an infinite family of connected bipartite graphs that illustrate the tight bound in Theorem \ref{theorem_bipartite}.
        \end{conjecture}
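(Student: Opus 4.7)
The plan is to start from the infinite bipartite family constructed in the proof of the tight-bound theorem of this section, whose members are parametrised by $p \geq 3$ and achieve equality in Theorem \ref{theorem_bipartite}, but which the subsequent observation shows to be disconnected for $p \ge 4$. The obstruction to connectedness is easy to locate: the construction contains $p^2 - 4p + 3$ isolated $K_2$ components, each consisting of a single matched pair between a leaf of $A$ and a pendant of $B$. Since the annihilation number depends only on the degree sequence, any 2-switch (delete edges $uv$ and $u'v'$, add $uv'$ and $u'v$ when this keeps the graph simple and bipartite) preserves $a(G)$. The strategy is therefore to apply one such 2-switch per isolated $K_2$, each time choosing the partner edge from inside the current main component so that the isolated pair gets absorbed.

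Concretely, for each isolated edge $uv$ with $u$ and $v$ on opposite sides of the bipartition $X \cup Y$, pick any matching edge $u'v'$ of the main component with $u' \in X$ and $v' \in Y$; the swap $\{uv, u'v'\} \mapsto \{uv', u'v\}$ preserves bipartiteness and degrees while unifying the previously isolated pair with the main component. Iterating absorbs every isolated $K_2$ into a single connected piece. To preserve the extremal identity, one then needs to verify that $\mu(G)$ stays equal to $p^2 - 1$: each swap replaces two non-adjacent matched edges by two non-adjacent edges that can be reinstated in a matching, so the matching number cannot drop; combined with the K\"{o}nig--Egerv\'{a}ry identity $\alpha(G) + \mu(G) = n(G)$ for bipartite graphs, $\alpha(G)$ is also preserved at $p^2 - 1$. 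Consequently the modified graph is connected, bipartite, and satisfies $a(G) - \alpha(G) = (p-1)^2 = 2 + \mu(G) - 2\sqrt{1 + \mu(G)}$ for every $p \ge 4$, supplying an infinite family illustrating the tight bound.

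The main obstacle I anticipate is the combinatorial bookkeeping needed to certify that $\mu$ does not drop under the iterated swaps; a single poorly-chosen swap could in principle create or destroy augmenting paths, so the partner edge should be selected from a fixed maximum matching that is maintained consistently after each swap. A secondary technical point is ensuring that the sequence of swaps really terminates in a single component rather than stalling at a smaller count, which I would handle by always picking the partner edge inside the current largest component so that the component count strictly decreases. If this switching argument becomes too delicate, an alternative is to design a fresh connected construction \emph{ab initio} by attaching all pendants directly to the high-degree $K_{p-1, p-1}$ block so that no isolated $K_2$ ever arises, with pendant multiplicities chosen to replicate the original extremal degree sequence.
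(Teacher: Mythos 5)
This statement is a \emph{Conjecture}: the paper offers no proof of it (the authors only exhibit the connected case $p=3$ and observe that their extremal family is disconnected for $p\geq 4$), so there is nothing to compare your argument against; it must stand on its own, and unfortunately it does not. Your preliminary observations are sound --- a 2-switch preserves the degree sequence, hence $a(G)$ and $m(G)$, and can be arranged to preserve bipartiteness --- but the switching step itself fails on this particular construction. The extremal identity forces $\mu(G)=p^2-1=n(G)/2$, i.e.\ the graph must retain a \emph{perfect} matching (indeed, since $a(G)=2p^2-2p$ is fixed by the degree sequence and $\alpha(G)=n(G)-\mu(G)$, one checks that $a-\alpha=2+\mu-2\sqrt{1+\mu}$ holds if and only if $\mu=p^2-1$). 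Now examine the possible partner edges $u'v'$ in the main component. Every matching edge there is a pendant edge whose $A$-endpoint is a leaf; using such an edge as the partner merely relocates the isolated $K_2$ (the old leaf $u'$ and the old isolated vertex $v$ become a new isolated $K_2$), so the component count does not decrease and the process stalls at the very first step. Using any non-pendant edge instead (an edge of $K_{p-1,p-1}$ or a $B$--$C$ edge) does merge components, but then $v'$ acquires a second degree-one neighbour (the transplanted $u$ alongside its original $A$-leaf), and two leaves sharing their unique neighbour cannot both be saturated, so the perfect matching is destroyed and $\mu$ drops. There is no third option, and the unique perfect matching of the original graph leaves you no freedom in ``maintaining a maximum matching consistently.''

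Worse, no sequence of degree-preserving moves can rescue the plan, nor can your fallback construction that ``replicates the original extremal degree sequence'': the degree sequence itself is incompatible with connectedness. It has $2p^2-4p+2$ vertices of degree one and only $4p-4$ vertices of higher degree. In a connected graph on more than two vertices no two leaves are adjacent, so a perfect matching must assign the leaves to pairwise distinct non-leaf neighbours, which requires $2p^2-4p+2\leq 4p-4$, i.e.\ $(p-1)(p-3)\leq 0$. For every $p\geq 4$ this fails, so every connected graph with this degree sequence satisfies $\mu<p^2-1$ and therefore misses the equality $a(G)-\alpha(G)=2+\mu(G)-2\sqrt{1+\mu(G)}$. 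Any genuine resolution of the conjecture must construct connected extremal graphs with different degree sequences (still subject to $1+\mu(G)$ being a perfect square), which is precisely why the authors left it open.
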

            
        \begin{conjecture}
             The minimum number of vertices for a bipartite graph satisfying the equality $a(G) - \alpha(G) = 2 + \mu(G) - 2\sqrt{1+\mu(G)}$ is 16.
             
        \end{conjecture}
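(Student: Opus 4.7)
The plan is to first extract an integrality constraint from the equality and then to carve the problem into cases according to $\mu(G)$. Because $a(G)-\alpha(G)$ is an integer, $2+\mu(G)-2\sqrt{1+\mu(G)}$ must be an integer as well, which forces $1+\mu(G)$ to be a perfect square. Writing $\mu(G)=k^{2}-1$ for some $k\ge 1$, the equality simplifies to $a(G)-\alpha(G)=(k-1)^{2}$, so only the values $\mu(G)\in\{0,3,8,15,24,\dots\}$ need be considered.

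Next I would exploit the standard bipartite inequality $\mu(G)\le n(G)/2$ to handle all large values of $k$ at once. For $k\ge 3$ this already gives $n(G)\ge 2(k^{2}-1)\ge 16$, and the construction following Theorem \ref{theorem_bipartite} (with $p=3$) exhibits equality at $n(G)=16$, so those cases contribute exactly $16$. The case $k=1$ is vacuous ($a(G)-\alpha(G)=0$). Thus, modulo an implicit non-triviality assumption, the conjecture is reduced to ruling out every bipartite graph with $n(G)<16$, $\mu(G)=3$, and $a(G)-\alpha(G)=1$.

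For this residual $k=2$ case I would chase equality backwards through the inequality chain used in the proof of Theorem \ref{theorem_bipartite}. Equality in Lemma \ref{lemma:in_bipartite_a-alpha<m(A)} forces $m(G[A])=1$, while equality in Lemma \ref{lemma:edges=n^2/4} forces $G[B]$ to be a balanced complete bipartite graph. Since $a(G)=\alpha(G)+1=n(G)-2$, one obtains $|B|=2$ and $G[B]\cong K_{1,1}$, so the two highest-degree vertices are adjacent, exactly one edge lies inside $A$, and all remaining edges go between $A$ and $B$. I would then enumerate the admissible degree sequences using $\sum_{i=1}^{n-2}d_{i}\le m(G)<\sum_{i=1}^{n-1}d_{i}$ and test each profile for realizability as a bipartite graph on $n\in\{6,7,\dots,15\}$ vertices.

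The main obstacle is precisely this $k=2$ enumeration. The structural conditions above are fairly loose and do not by themselves exclude small witnesses; indeed the tree bound $a(T)-\alpha(T)\le(\mu(T)-1)/2$ coincides with the bipartite bound at $\mu=3$, so any tree attaining the tree bound would also attain the bipartite bound with the same parameters. Consequently, pushing the argument through will almost certainly require a sharpening of the hypothesis -- for example, restricting to bipartite graphs that are neither trees nor forests, or to the regime $\mu(G)\ge 8$ in which the bipartite bound is strictly stronger than both the tree and the general K\"onig--Egerv\'ary bounds -- and pinning down this intended scope is the first thing that needs to be done before the casework can succeed.
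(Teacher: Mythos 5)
The paper offers no proof of this statement---it is posed as a conjecture---so there is nothing of the authors' to compare your proposal against; it must be judged on its own merits. Your reduction is sound as far as it goes: since $a(G)-\alpha(G)$ is an integer, $2\sqrt{1+\mu(G)}$ must be an integer, which forces $1+\mu(G)=k^{2}$ and hence $a(G)-\alpha(G)=(k-1)^{2}$; and for $k\ge 3$ the elementary bound $n(G)\ge 2\mu(G)=2(k^{2}-1)\ge 16$ disposes of all those cases at once, with the paper's $p=3$ construction showing that $16$ is actually attained there.

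The genuine problem is that what you describe as the ``main obstacle'' in the residual case $k=2$ is not an obstacle to be overcome by enumeration---it is a refutation of the conjecture as literally stated, and you should say so. At $\mu(G)=3$ the right-hand side equals $2+3-2\sqrt{4}=1$, and the paper's own family in Figure \ref{fig:mu3} contains, at $n=6$, a tree with degree sequence $1,1,1,2,2,3$, for which $m=5$, $a(T)=4$, $\alpha(T)=3$, $\mu(T)=3$, so $a(T)-\alpha(T)=1$: this is a bipartite graph on $6<16$ vertices attaining the equality. (Even more trivially, $\mu(G)=0$ yields equality for any edgeless graph.) Consequently the degree-sequence casework you plan for $n\in\{6,\dots,15\}$ cannot succeed, because witnesses exist. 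The two honest conclusions available are: first, the conjecture is false as stated; second, the statement your $k\ge 3$ argument already proves completely is that among bipartite graphs attaining the equality with $\mu(G)\ge 8$ (the regime in which the bipartite bound is strictly stronger than the tree bound, equivalently $a(G)-\alpha(G)\ge 4$), the minimum order is $16$. You should exhibit the $6$-vertex counterexample explicitly and then state and prove that corrected version, rather than deferring the question of the conjecture's intended scope.
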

        \begin{problem}
            Mantel's theorem states that for triangle-free graphs, the number of edges $m$ is at most $\frac{n(G)^2}{4}$, which is the same bound as for bipartite graphs. Find the tight upper bound for the difference between $a(G)$ and $\alpha(G)$ for triangle-free graphs.
        \end{problem}
        
        \begin{problem}
            Find the tight upper bound for the difference between $a(G)$ and $\alpha(G)$ for general graphs.
        \end{problem}

\end{document}